\newcommand{\ulim}{\mathrm u\mbox{-}\kern-2pt\varinjlim}
\newcommand{\uLlim}{\mathrm u^L\mbox{-}\kern-2pt\varinjlim}
\newcommand{\tlim}{\mathrm t\mbox{-}\kern-2pt\varinjlim}
\newcommand{\glim}{\mathrm g\mbox{-}\kern-2pt\varinjlim}
\newcommand{\w}{\omega}
\newcommand{\LP}{\operatornamewithlimits{\overrightarrow{\textstyle\prod}}}
\newcommand{\RP}{\operatornamewithlimits{\overleftarrow{\textstyle\prod}}}
\newcommand{\RLP}{\operatornamewithlimits{\overleftrightarrow{\textstyle\prod}}}
\newcommand{\Ra}{\Rightarrow}
\newcommand{\LRs}{\mathrm{LR}}
\newcommand{\RLs}{\mathrm{RL}}
\newcommand{\Rs}{\mathrm{R}}
\newcommand{\Ls}{\mathrm{L}}
\newcommand{\tprod}{\operatornamewithlimits{\textstyle{\prod}}}
\newcommand{\U}{\mathcal U}
\newcommand{\IN}{\mathbb N}
\newcommand{\PTA}{\mathrm{PTA}}
\newcommand{\HH}{\mathcal H}
\newcommand{\IR}{\mathbb R}
\newcommand{\cl}{\mathrm{cl}}
\newcommand{\supp}{\mathrm{supp}}
\newcommand{\II}{\mathbb I}
\newcommand{\id}{\mathrm{id}}
\newcommand{\e}{\varepsilon}
\newcommand{\BB}{\mathcal B}
\newcommand{\PM}{\mathcal{P\!M}}
\newcommand{\A}{\mathcal A}
\newcommand{\dlim}{\varinjlim}
\newcommand{\mprod}{\operatornamewithlimits{\bigwedge}}
\newcommand{\rozd}{\, ;\;}
\theoremstyle{plain} 
\newtheorem{theorem}{\indent\sc Theorem}[section]
\newtheorem{proposition}[theorem]{\indent\sc Proposition}
\theoremstyle{definition} 
\newtheorem{definition}[theorem]{\indent\sc Definition}
\newtheorem{remark}[theorem]{\indent\sc Remark}
\newtheorem{example}[theorem]{\indent\sc Example}
\newtheorem{problem}[theorem]{\indent\sc Problem}
\begin{document}

\title[Direct limits of topological groups]{Direct limit topologies in the categories of topological groups and of uniform spaces}

\author[T.~Banakh]{Taras Banakh} 

\author[D.~Repov\v s]{Du\v san Repov\v s} 

\subjclass[2000]{ 
Primary 22A05; Secondary 18A30; 54B30; 54E15; 54H11.
}

\keywords{Direct limit, topological group, uniform space.}

\address{
Instytut Matematyki, Uniwersytet Humanistyczno-Przyrodniczy Jana Kochanowskiego,  \'Swi\c etokrzyska 15, Kielce, Poland}

\email{tbanakh@yahoo.com}

\address{Faculty of Mathematics and Physics, 
University of Ljubljana,
Jadranska 19,
Ljubljana, Slovenia 1000}
\email{dusan.repovs@guest.arnes.si}

\thanks{This research was supported by Slovenian Research Agency grant P1-0292-0101, J1-9643-0101 and 
J1-2057-0101.}

\maketitle

\begin{abstract} Given an increasing sequence  $(G_n)$ of topological groups, we study the topologies of the direct limits of the sequence $(G_n)$ in the categories of topological groups and of uniform spaces and find conditions under which these two direct limit topologies coincide.
\end{abstract}

\section{Introduction}

Given a tower
$$G_0\subset G_1\subset G_2\subset\cdots
$$of topological groups, we study
in this paper the topological structure of the direct limit $\glim G_n$ of the tower $(G_n)$ in the category of topological groups. By definition, $\glim G_n$ is the union $G=\bigcup_{n\in\w} G_n$ endowed with the strongest (not necessarily Hausdorff) topology that turns $G$ into a topological group and makes the identity inclusions $G_n\to G$, $n\in\w$, continuous. 

Besides the topology of $\glim G_n$, the union $G=\bigcup_{n\in\w}G_n$ carries the topology of the direct limit $\tlim G_n$ of the tower $(G_n)_{n\in\w}$ in the category of topological spaces. The topology of $\tlim G_n$ is the strongest topology on $G$ making the identity inclusions $G_n\to G$, $n\in\w$, continuous.

The definitions of the direct limits $\glim G_n$ and $\tlim G_n$ imply that the identity map
$$\tlim G_n\to\glim G_n$$is continuous. This map is a homeomorphism if and only if $\tlim G_n$ is a topological group. It was observed in \cite{Ba98} and \cite{TSH}
that the group operation on $G=\tlim G_n$ is not necessarily continuous with respect to the topology $\tlim G_n$. Moreover, if each group $G_n$, $n\in\w$, is metrizable and closed in $G_{n+1}$, then the topological direct limit $\tlim G_n$ is a topological group if and only if either all groups $G_n$ are locally compact or 
some group $G_n$ is open in all groups $G_m$, $m\ge n$ (see \cite{BaZ} or \cite{Yama}).

Thus in many interesting cases (in particular, those considered in
\cite{Dierolf}, \cite{Floret}, \cite{Glo03}, \cite{Glo06}, \cite{Glo08}),
the topology of $\glim G_n$ differs from the topology of the topological direct limit $\tlim G_n$. However, in contrast with the topology of $\tlim G_n$ which has an explicit description (as the family of all subsets $U\subset \bigcup_{n\in\w}G_n$ that have open traces $U\cap G_n$ on all spaces $G_n$) the topological structure of the direct limit $\glim G_n$ is not so clear. The problem of explicit description of the topological structure of the direct limit $\glim G_n$ was discussed in 
\cite{Eda}, 
\cite{HSTH},
\cite{Glo03}, 
\cite{Glo06}, 
\cite{Glo08},
\cite{TSH}.

In this paper we shall show that under certain conditions on a tower of topological groups $(G_n)_{n\in\w}$ the topology  of the direct limit $\glim G_n$ coincides with one (or all) of four simply described topologies $\LA[\tau]$, $\RA[\tau]$, $\LR[\tau]$ or $\RL[\tau]$ on the group $G=\bigcup_{n\in\w}G_n$. These topologies are considered in 
Section~\ref{s:top}. In Sections~\ref{s:PTA}
and
\ref{s1:SIN} we study two properties ($\PTA$ and the balanced property) of a tower of topological groups $(G_n)_{n\in\w}$ implying that the topology of $\glim G_n$ coincides with the topology $\LR[\tau]$, which is the strongest among the four topologies on $G$. In Section~\ref{s2:SIN} we define another (bi-balanced) property of the tower $(G_n)$ guaranteeing that the topology of $\glim G_n$ coincides with the topology $\RL[\tau]$, which is the weakest among the four topologies on $G$. In Section~\ref{s:ulim-top} we reveal the uniform nature of the topologies $\LA[\tau]$ and $\RA[\tau]$ and show that they coincide with the topologies of the uniform direct limits $\ulim G_n^{\Ls}$ and $\ulim G_n^R$ of the groups $G_n$ endowed with the left and right uniformities. In Section~\ref{s:OP} we sum up the results obtained in this paper and pose some open problems.

\section{The semitopological groups $\LA[G]$, $\RA[G]$, $\RL[G]$ and $\LR[G]$}\label{s:top} 

In this section, given a tower of topological groups
$$G_0\subset G_1\subset G_2\subset\cdots
$$we define four  topologies $\LA[\tau]$, $\RA[\tau]$, $\RL[\tau]$ or $\LR[\tau]$ on the group $G=\bigcup_{n\in\w}G_n$. 

Given a sequence of subsets $(U_n)_{n\in\w}$ of the group $G$, consider their directed products in $G$:
$$\begin{aligned}
&\LP_{n\in\w}U_n=\bigcup_{m\in\w}\LP_{0\le n\le m}U_n\mbox{ \ where \ }\LP_{k\le n\le m}U_n=U_k U_{k+1}\cdots U_m,\\
&\RP_{n\in\w}U_n=\bigcup_{m\in\w}\RP_{0\le n\le m}U_n\mbox{ \ where \ }\RP_{k\le n\le m}U_n=U_m\cdots U_{k+1}U_k,\\
&\RLP_{n\in\w}U_n=\bigcup_{m\in\w}\RLP_{0\le n\le m}U_n\mbox{ where }
\RLP_{k\le n\le m}U_n=U_m\cdots U_kU_k\cdots U_m.
\end{aligned}
$$
Observe that $$\big(\LP_{n\in\w}U_n\big)^{-1}=\RP_{n\in\w}U_n^{-1}\mbox{ and }\RLP_{n\in\w}U_n=
\big(\RP_{n\in\w}U_n\big)\cdot\big(\LP_{n\in\w}U_n\big).$$

In each topological group $G_n$ fix a base $\BB_n$ of open symmetric neighborhoods $U=U^{-1}\subset G_n$ of the neutral element $e$. 

The topologies $\LA[\tau]$, $\RA[\tau]$, $\RL[\tau]$ and $\LR[\tau]$ on the group $G=\bigcup_{n\in\w}G_n$ are generated by the bases:
$$
\begin{aligned}
&\LA[\BB]=\{(\LP_{n\in\w}U_n\big)\cdot x \rozd     x\in G,\;(U_n)_{n\in\w}\in\tprod\limits_{n\in\w}\BB_n\},\\
&\RA[\BB]=\{x\cdot (\RP_{n\in\w}U_n\big) \rozd   x\in G,\;(U_n)_{n\in\w}\in\tprod_{n\in\w}\BB_n\},\\
&\RL[\BB]=\{x\cdot(\RLP_{n\in\w}U_n\big)\cdot y\rozd    x,y\in G,\;(U_n)_{n\in\w}\in\tprod_{n\in\w}\BB_n\},\\
&\LR[\BB]=\{x\big(\RP_{n\in\w}xU_n\big)\cap \big(\LP_{n\in\w}U_n\big)y\rozd    x,y\in G,\;(U_n)_{n\in\w}\in\tprod_{n\in\w}\BB_n\}.
\end{aligned}
$$
By $\LA[G]$, $\RA[G]$, $\RL[G]$, $\LR[G]$ we denote the groups $G$ endowed with the topologies 
$\LA[\tau]$, $\RA[\tau]$, $\RL[\tau]$, $\LR[\tau]$, respectively. It is easy to check that 
$\LA[G]$, $\RA[G]$, $\RL[G]$, $\LR[G]$ are semitopological groups having the families
$$
\begin{aligned}
&\LA[\BB]_e=\{\LP_{n\in\w}U_n\rozd (U_n)_{n\in\w}\in\prod_{n\in\w}\BB_n\},\\
&\RA[\BB]_e=\{\RP_{n\in\w}U_n\rozd (U_n)_{n\in\w}\in\prod_{n\in\w}\BB_n\},\\
&\RL[\BB]_e=\{\RLP_{n\in\w}U_n\rozd (U_n)_{n\in\w}\in\prod_{n\in\w}\BB_n\}=\{U^{-1} U\rozd U\in\LA[\BB]_e\},\\
&\LR[\BB]_e=\{\big(\LP_{n\in\w}U_n\big)\cap \big(\RP_{n\in\w}U_n\big)\rozd (U_n)_{n\in\w}\in\prod_{n\in\w}\BB_n\}=\{U\cap  U^{-1}\rozd U\in\LA[\BB]_e\}
\end{aligned}
$$as neighborhood bases at the identity $e$. Since the inversion $(\cdot)^{-1}:G\to G$ is continuous with respect to the topologies $\RL[\tau]$ or $\LR[\tau]$, the semitopological groups $\RL[G]$ and $\LR[G]$ are quasitopological groups.

We recall that a group $H$ endowed with a topology is 
\begin{itemize}
\item a {\em semitopological group} if the binary operation $H\times H\to H$, $(x,y)\mapsto xy,$ is separately continuous;
\item a {\em quasitopological group} if $H$ is a semitopological group with continuous inversion $(\cdot)^{-1}:H\to H$, $(\cdot)^{-1}:x\mapsto x^{-1}$.
\end{itemize}

Now we see that for any tower $(G_n)_{n\in\w}$ of topological groups we get the following five semitopological groups linked by continuous identity homomorphisms:

\begin{picture}(200,90)(-100,-10)
\put(19,29){$\LR[G]$}
\put(33,28){\vector(3,-2){30}}
\put(33,38){\vector(3,2){30}}
\put(69,2){$\LA[G]$}
\put(85,8){\vector(3,2){30}}
\put(69,58){$\RA[G]$}
\put(85,56){\vector(3,-2){30}}
\put(120,29){$\RL[G]$}
\put(135,32){\vector(1,0){27}}
\put(170,30){$\glim G_n$}
\end{picture}

The continuity of the final map in the diagram is not trivial: 

\begin{proposition}\label{p2.1} The identity map $\RL[G]\to\glim G_n$ is continuous.
\end{proposition}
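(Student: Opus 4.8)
The plan is to prove the equivalent statement that the topology $\RL[\tau]$ is finer than the group topology of $\glim G_n$, which is exactly what continuity of the identity homomorphism $\RL[G]\to\glim G_n$ means. Since $\RL[G]$ is a quasitopological group and $\glim G_n$ is a topological group, in both spaces all translations are homeomorphisms, and the map in question is a homomorphism; consequently it is enough to check continuity at the neutral element $e$. So I must show that every neighborhood $W$ of $e$ in $\glim G_n$ contains some basic set $\RLP_{n\in\w}U_n\in\RL[\BB]_e$.

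To build such a product I would first use the fact --- crucial here --- that $\glim G_n$ is a genuine topological group, so that its multiplication is jointly continuous. Shrinking $W$, I fix a symmetric neighborhood $V$ of $e$ in $\glim G_n$ with $VV\subseteq W$, and then construct recursively neighborhoods $V=V_{-1}\supseteq V_0\supseteq V_1\supseteq\cdots$ of $e$ satisfying $V_kV_k\subseteq V_{k-1}$ for every $k\ge 0$. Because each inclusion $G_k\to\glim G_n$ is continuous, the trace $V_k\cap G_k$ is a neighborhood of $e$ in $G_k$, so I may choose $U_k\in\BB_k$ with $U_k\subseteq V_k$.

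A routine telescoping estimate then yields $U_0U_1\cdots U_m\subseteq V_0V_1\cdots V_m\subseteq V$ for every $m$, so that $\LP_{n\in\w}U_n\subseteq V$. Invoking the identity $\RLP_{n\in\w}U_n=\big(\LP_{n\in\w}U_n\big)^{-1}\big(\LP_{n\in\w}U_n\big)$ recorded in this section (valid since each $U_n$ is symmetric) together with $V=V^{-1}$, I conclude
$$\RLP_{n\in\w}U_n\subseteq V^{-1}V=VV\subseteq W.$$
Thus $W$ contains the basic neighborhood $\RLP_{n\in\w}U_n\in\RL[\BB]_e$, which establishes continuity of the identity map at $e$, and hence, by the translation argument above, everywhere.

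The main obstacle is the recursive selection of the $U_k$. It depends essentially on the joint continuity of multiplication in $\glim G_n$ --- a property not available in $\RL[G]$ itself, whose operation is only separately continuous --- in order to manufacture the nested neighborhoods $V_k$, while the continuity of the inclusions is what forces each $U_k$ into the prescribed base $\BB_k$. The factorization of the two-sided product $\RLP_{n\in\w}U_n$ through the one-sided product $\LP_{n\in\w}U_n$ is the device that reduces the symmetric estimate to a clean telescoping bound; without it one would have to control the awkward products $U_m\cdots U_0U_0\cdots U_m$ directly.
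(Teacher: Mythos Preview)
Your proof is correct and follows essentially the same route as the paper's: reduce to continuity at $e$, use the topological group structure of $\glim G_n$ to build a telescoping chain $V_kV_k\subset V_{k-1}$, pull these back to basic neighborhoods $U_k\in\BB_k$, and then use the factorization $\RLP_{n\in\w}U_n=(\LP_{n\in\w}U_n)^{-1}(\LP_{n\in\w}U_n)\subset V^{-1}V$. The only cosmetic difference is that you impose the nesting $V_{k}\subseteq V_{k-1}$ explicitly and phrase the inclusion $\LP_{n\le m}U_n\subset V$ as a right-to-left telescope, whereas the paper proves the equivalent statement $(\LP_{n<m}V_n)\cdot V_m^2\subset V$ by a short induction.
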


\begin{proof} Since $\RL[G]$ and $\glim G_n$ are semitopological groups, it suffices to prove the continuity of the identity map $\RL[G]\to\glim G_n$ at the neutral element $e$.

Given a neighborhood $U\subset\glim G_n$ of $e$, find an open neighborhood $V\subset\glim G_n$ of $e$ such that $V^{-1}V\subset U$. Such a neighborhood exists because $\glim G_n$ is a topological group. By induction, construct a sequence of open symmetric neighborhoods $V_n\subset\glim G_n$ of $e$ such that $V_0^2\subset V$ and $V_{n+1}^2\subset V_n$ for all $n\in\w$. By induction on $m\in\w$ we shall prove the inclusion 
\begin{equation}\label{ind1}
\big(\LP_{0\le n<m}V_n\big)\cdot V_m^2\subset V.
\end{equation}
For $m=0$ this inclusion holds according to the choice of $V_0$. Assuming that for some $m$ the inclusion is true observe that
$$\big(\LP_{0\le n\le m}V_n\big)\cdot V_{m+1}^2=\big(\LP_{0\le n<m}V_n\big)\cdot V_mV_{m+1}^2\subset 
\big(\LP_{0\le n<m}V_n\big)\cdot V_mV_m\subset V$$by the inductive hypothesis.
Then $$\LP_{n\in\w}V_n=\bigcup_{m\in\w}\LP_{0\le n\le m}V_n\subset V.$$

For every $n\in\w$ find a basic neighborhood $W_n\in\BB_n$ in the group $G_n$ such that $W_n\subset V_n$ and observe that
$\LP_{n\in\w}W_n\subset\LP_{n\in\w}V_n\subset V$ and hence
$$\RL[\BB]_e\ni\RLP_{n\in w}W_n=\big(\LP_{n\in\w}W_n\big)^{-1}\cdot \LP_{n\in\w}W_n\subset V^{-1}V\subset U$$witnessing the continuity of the identity map $\RL[G]\to\glim G_n$ at $e$. 
\end{proof}

One may ask about conditions guaranteeing that the semitopological groups $\LA[G]$, $\RA[G]$, $\LR[G]$ or $\RL[G]$ are topological groups.

\begin{theorem}\label{t2.2} The following conditions (1) through (5) are equivalent\textup{:}
\begin{enumerate}
\item $\LA[G]$ is a topological group\textup{;}
\item $\RA[G]$ is a topological group\textup{;}
\item $\LR[G]$ is a topological group\textup{;}
\item the identity map $\RL[G]\to\LR[G]$ is continuous\textup{;}
\item the identity map $\LR[G]\to \glim G_n$ is a homeomorphism.
\end{enumerate}
The equivalent conditions (1) through (5) imply the following two equivalent conditions\textup{:}
\begin{enumerate}
\item[(6)] $\RL[G]$ is a topological group\textup{;}
\item[(7)] the identity map $\RL[G]\to \glim G_n$ is a homeomorphism.
\end{enumerate}
\end{theorem}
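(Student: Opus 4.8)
The plan is to reduce everything to neighborhood bases at the identity $e$: in a semitopological group all translations are homeomorphisms, so each of the five topologies is determined by its trace at $e$, and such a group is a topological group precisely when for every neighborhood $U$ of $e$ there is a neighborhood $V$ of $e$ with $VV\subseteq U$ and $V^{-1}\subseteq U$. Two structural observations will carry most of the weight. First, inversion $x\mapsto x^{-1}$ sends the basic set $(\LP_{n\in\w}U_n)\cdot x$ to $x^{-1}\cdot(\RP_{n\in\w}U_n)$ (using that the $U_n$ are symmetric), so it is an involutive homeomorphism $\LA[G]\to\RA[G]$ that is a group anti-isomorphism; transporting the operations across it shows that $\LA[G]$ is a topological group if and only if $\RA[G]$ is, giving (1)$\Leftrightarrow$(2). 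Second, comparing neighborhood bases at $e$ and refining a pair $(U_n),(U'_n)$ to $(U_n\cap U'_n)$ shows that $\LR[\tau]$ is exactly the join $\LA[\tau]\vee\RA[\tau]$ of the two one-sided topologies.

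With these in hand I would run the cycle (1)$\Rightarrow$(3)$\Rightarrow$(4)$\Rightarrow$(1) together with (3)$\Leftrightarrow$(5). For the step from (1) and (2) to (3): the join of two topological-group topologies is again one, since the diagonal $G\to(G,\LA[\tau])\times(G,\RA[\tau])$ identifies $\LR[G]$ with a subgroup of a product of topological groups; as (1)$\Leftrightarrow$(2), condition (1) yields (3). For (3)$\Rightarrow$(4),(5): one first checks that every inclusion $G_n\to\LR[G]$ is continuous, because the basic set $(\LP_{n\in\w}U_n)\cap(\RP_{n\in\w}U_n)$ contains $U_n$. Hence if $\LR[\tau]$ is a group topology it makes all inclusions continuous and is therefore coarser than the finest such topology $\glim G_n$; combined with the chain $\glim G_n\subseteq\RL[\tau]\subseteq\LR[\tau]$ read off the diagram (and Proposition~\ref{p2.1}), this squeezes $\LR[\tau]=\glim G_n=\RL[\tau]$, which is simultaneously (4) and (5). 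The implication (5)$\Rightarrow$(3) is immediate, since $\glim G_n$ is a topological group by definition.

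The crux is (4)$\Rightarrow$(1). Condition (4) says $\RL[\tau]$ is finer than $\LR[\tau]$, and as $\RL[\tau]$ is the weakest and $\LR[\tau]$ the strongest of the four topologies this forces $\LA[\tau],\RA[\tau],\RL[\tau],\LR[\tau]$ to coincide. I then verify the criterion recalled above for $\LA[G]$ by exploiting this coincidence three times. Inversion: since $\LP_{n\in\w}U_n$ is now also an $\RA[\tau]$-neighborhood of $e$, it contains some $\RP_{n\in\w}V_n=(\LP_{n\in\w}V_n)^{-1}$. Multiplication: since $\LP_{n\in\w}U_n$ is an $\RL[\tau]$-neighborhood it contains some $\RLP_{n\in\w}V_n=(\RP_{n\in\w}V_n)(\LP_{n\in\w}V_n)$; since $(\LP_{n\in\w}V_n)\cap(\RP_{n\in\w}V_n)$ is an $\LR[\tau]$-neighborhood it contains a basic left product $\LP_{n\in\w}W_n$, whence
\[
\big(\LP_{n\in\w}W_n\big)\big(\LP_{n\in\w}W_n\big)\subseteq\big(\RP_{n\in\w}V_n\big)\big(\LP_{n\in\w}V_n\big)\subseteq\LP_{n\in\w}U_n .
\]
Thus multiplication is jointly continuous at $(e,e)$ and $\LA[G]$ is a topological group, closing the cycle.

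Finally, for (6)$\Leftrightarrow$(7): exactly as above the inclusions $G_n\to\RL[G]$ are continuous because $\RLP_{n\in\w}U_n$ contains $U_n$, so if $\RL[G]$ is a topological group then $\RL[\tau]\subseteq\glim G_n$, which with $\glim G_n\subseteq\RL[\tau]$ gives (7); and (7)$\Rightarrow$(6) is immediate. That (1)--(5) imply (6), hence (7), is already contained in the equality $\RL[\tau]=\glim G_n$ produced in (3)$\Rightarrow$(4),(5). I expect the main obstacle to be the multiplication step in (4)$\Rightarrow$(1): the two-sided product $(\RP_{n\in\w}V_n)(\LP_{n\in\w}V_n)$ is genuinely larger than a square of one-sided products, and the argument works only because the hypothesis collapses the left, right and two-sided product topologies at once, allowing one to slide a left product inside the intersection $(\LP_{n\in\w}V_n)\cap(\RP_{n\in\w}V_n)$.
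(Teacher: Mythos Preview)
Your argument is correct and runs along the same cycle of implications as the paper's proof: both use that inversion interchanges $\LA[\tau]$ and $\RA[\tau]$, that the inclusions $G_n\hookrightarrow\LR[G]$ (and $G_n\hookrightarrow\RL[G]$) are continuous so the universal property of $\glim G_n$ applies, and that under (4) all four topologies collapse to one. The one noteworthy difference is the step $(4)\Rightarrow(1)$. The paper dispatches joint continuity of multiplication in a single line by observing that multiplication is \emph{always} continuous as a map $\RA[G]\times\LA[G]\to\RL[G]$ (this is exactly the identity $(\RP_{n}V_n)(\LP_{n}V_n)=\RLP_{n}V_n$), so once the four topologies coincide the map $\LA[G]\times\LA[G]\to\LA[G]$ is continuous for free. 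Your version unpacks this same idea into two refinements, first finding $\RLP_{n}V_n\subset\LP_{n}U_n$ and then sliding a left product inside $(\LP_{n}V_n)\cap(\RP_{n}V_n)$; this is a bit longer but perfectly valid and makes the mechanism more visible. Your description of $\LR[\tau]$ as the join $\LA[\tau]\vee\RA[\tau]$ and the diagonal-in-product argument for $(1)\Rightarrow(3)$ is also a mild variation: the paper simply notes that once $\LA[\tau]=\RA[\tau]$ the join equals both.
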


\begin{proof} $(1)\Ra(2)$ Assume that $\LA[G]$ is a topological group. Then the identity map $\LA[G]\to\RA[G]$ is continuous because each basic neighborhood $\RP_{n\in\w}U_n\in\RA[\BB]_e$ of $e$ in $\RA[G]$ is open in $\LA[G]$, being the inversion $(\LP_{n\in\w}U_n\big)^{-1}$ of the basic neighborhood $\LP_{n\in\w}U_n\in\LA[\BB]_e$ of $e$ in the topological group $\LA[G]$. By the same reason, the identity map $\LA[G]\to\RA[G]$ is open. Consequently, the topologies $\LA[\tau]$ and $\RA[\tau]$ on $G$ coincide, and hence $\LR[G]$ is a topological group.
\smallskip

The implication $(2)\Ra(1)$ can be proved by analogy.
\smallskip

$(1)\Ra(3)$ If $\LA[G]$ is a topological group, then $\LA[\tau]=\RA[\tau]$ and then $\LR[\tau]=\LA[\tau]=\RA[\tau]$, by the definition of the topology $\LR[\tau]$. Consequently, $\LR[G]=\LA[G]$ is a topological group.
\smallskip

$(3)\Ra(5)$ If $\LR[G]$ is a topological group, then the identity map $\glim G_n\to\LR[G]$ is continuous by the definition of $\glim G_n$ because all the identity homomorphisms $G_n\to \LR[G]$, $n\in\w$, are continuous. 
The inverse (identity) map $\LR[G]\to\glim G_n$ is always continuous by Proposition~\ref{p2.1}. So, it is a homeomorphism.

$(5)\Ra(4)$ If the identity map $\LR[G]\to \glim G_n$ is a homeomorphism, then the identity map $\RL[G]\to\LR[G]$ is continuous being the composition of two continuous maps $\RL[G]\to\glim G_n\to\LR[G]$.

$(4)\Ra(1)$ Assume that the identity map $\RL[G]\to\LR[G]$ is a homeomorphism. Then the identity maps 
between the semitopological groups $\LR[G],\LA[G],\RA[G],\RL[G]$ are homeomorphisms. Consequently, $\LA[G]$ is a quasitopological group because so is $\LR[G]$ or $\RL[G]$. To see that $\LA[G]$ is a topological group, observe that the multiplication map $\LA[G]\times\LA[G]\to\LA[G]$, $(x,y)\mapsto xy$, is continuous, being continuous as a map 
 $\RA[G]\times\LA[G]\to\RL[G]$.
\smallskip

$(5)\Ra(7)$ If the identity map $\LR[G]\to\glim G_n$ is a homeomorphism, then the identity map $\glim G_n\to\RL[G]$ is continuous being the composition of two continuous maps $\glim G_n\to\LR[G]\to\RL[G]$. The continuity of the inverse (identity) map $\RL[G]\to\glim G_n$ was proved in Proposition~\ref{p2.1}.

$(7)\Ra(6)$ If the identity map $\RL[G]\to\glim G_n$ is a homeomorphism, then $\RL[G]$ is a topological group because so is $\glim G_n$.

The final implication $(6)\Ra(7)$ can be proved by analogy with $(3)\Ra(5)$.
\end{proof}

\begin{remark} The topology $\RL[\tau]$ on the union $G=\bigcup_{n\in\w}G_n$ of a tower of topological groups $(G_n)$ was introduced in \cite{TSH} and called the {\em bamboo-shoot\/} topology. This topology was later discussed in \cite{Eda}, 
\cite{Glo03}, \cite{Glo06}, \cite{Glo08}, \cite{HSTH}.
\end{remark}

\section{The Passing Through Assumption}\label{s:PTA}
 
In this section we shall discuss implications of $\PTA$, the Passing Through Assumption, introduced by Tatsuuma, Shimomura, and Hirai in \cite{TSH}.

\begin{definition} A tower of topological groups $(G_n)_{n\in\w}$ is defined to satisfy $\PTA$ if each group $G_n$ has a neighborhood base $\mathcal B_n$ at the identity $e$, consisting of open symmetric neighborhoods $U\subset G_n$ such that for every  $m\ge n$ and every neighborhood $V\subset G_m$ of $e$ there is a neighborhood $W\subset G_m$ of $e$ such that $WU\subset UV$.
\end{definition}

It was proved in 
\cite{HSTH} and 
\cite{TSH}
that for a tower of topological groups $(G_n)_{n\in\w}$ satisfying $\PTA$, the semitopological group $\RL[G]$ is a topological group, which can be identified with the direct limit $\glim G_n$.

The following theorem says a bit more:

\begin{theorem}\label{t3.2} If a tower of topological groups $(G_n)_{n\in\w}$ satisfies $\PTA$, then the semitopological group $\LA[G]$ is a topological group and hence the conditions (1) through (7) of Theorem~\textup{\ref{t2.2}} hold. In particular the topology of $\glim G_n$ coincides with any of the topologies: $\LA[\tau]$, $\RA[\tau]$, $\LR[\tau]$, $\RL[\tau]$.
\end{theorem}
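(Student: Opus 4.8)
The plan is to prove that $\LA[G]$ is a topological group; once this is established, Theorem~\ref{t2.2} immediately delivers conditions (1) through (7) and, in particular, the coincidence of $\LA[\tau]$, $\RA[\tau]$, $\LR[\tau]$, $\RL[\tau]$ with the topology of $\glim G_n$. The first observation is that $\LA[G]$ is already a semitopological group, so its left and right translations are homeomorphisms; hence every conjugation $x\mapsto axa^{-1}=(R_{a^{-1}}\circ L_a)(x)$ is automatically continuous. By the standard criterion (a semitopological group with inversion continuous at $e$ and multiplication jointly continuous at $(e,e)$ is a topological group) it therefore suffices to check two things for the neighbourhood base $\LA[\BB]_e=\{\LP_{n\in\w}U_n\}$ at the identity: that for every $(U_n)\in\prod_{n\in\w}\BB_n$ there is $(W_n)\in\prod_{n\in\w}\BB_n$ with $(\LP_{n\in\w}W_n)^{-1}\subseteq\LP_{n\in\w}U_n$ (continuity of inversion), and that there is $(W_n)$ with $(\LP_{n\in\w}W_n)\cdot(\LP_{n\in\w}W_n)\subseteq\LP_{n\in\w}U_n$ (joint continuity of multiplication).

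The engine of both verifications is a passing-through lemma. I would prove that for every $(U_n)\in\prod_{n\in\w}\BB_n$ there is a choice $(W_n)\in\prod_{n\in\w}\BB_n$, with $W_0\subseteq U_0$, such that for every $m\ge 1$
\begin{equation}\label{engine}
W_m\cdot U_0U_1\cdots U_{m-1}\subseteq U_0U_1\cdots U_{m-1}\cdot U_m.
\end{equation}
To construct $W_m$ one sets $V^{(m)}:=U_m$ and applies $\PTA$ successively to $U_{m-1},U_{m-2},\dots,U_0$ (each of index $<m$, so the passing-through property is available with ambient group $G_m$): one chooses neighbourhoods $V^{(k)}\subseteq G_m$ of $e$ with $V^{(k)}U_k\subseteq U_kV^{(k+1)}$ for $k=m-1,\dots,0$, and finally takes $W_m\in\BB_m$ with $W_m\subseteq V^{(0)}$. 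Chaining the $m$ resulting inclusions yields \eqref{engine}.

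Continuity of inversion then follows at once. Since each $W_n$ is symmetric, $(\LP_{n\in\w}W_n)^{-1}=\RP_{n\in\w}W_n=\bigcup_m W_m\cdots W_1W_0$, and a straightforward induction on $m$ using \eqref{engine} (with base case $W_0\subseteq U_0$ and inductive step $W_m\cdot(W_{m-1}\cdots W_0)\subseteq W_m\cdot U_0\cdots U_{m-1}\subseteq U_0\cdots U_{m-1}U_m$) shows $W_m\cdots W_1W_0\subseteq U_0U_1\cdots U_m$ for all $m$. Taking the union over $m$ gives $\RP_{n\in\w}W_n\subseteq\LP_{n\in\w}U_n$, which is exactly what is required.

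The remaining, and genuinely hard, point is joint continuity of multiplication, i.e.\ the product estimate $(\LP_{n\in\w}W_n)(\LP_{n\in\w}W_n)\subseteq\LP_{n\in\w}U_n$. A typical element of the left-hand side is $w_0\cdots w_p\,w_0'\cdots w_q'$ with $w_i,w_i'\in W_i$, and the only obstruction to rewriting it in the increasing order demanded by $\LP_{n\in\w}U_n$ is the junction $w_p\,w_0'$, where a high-index factor is immediately followed by a low-index one. The idea is to bubble the low-index factors of the second block leftwards through the first block, each elementary transposition being an instance of $\PTA$ in the form $W_pW_0'\subseteq W_0'\cdot V$ with $V\subseteq G_p$. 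The main obstacle of the whole argument is the bookkeeping: each such pass produces a correction term living in a higher group, which must itself be transported through the remaining factors, so a naive estimate accumulates uncontrolled errors. I therefore expect to strengthen \eqref{engine} into a version in which the correction neighbourhoods created at level $m$ are pre-absorbed into $U_m$, choosing the sequence $(W_n)$ to decrease fast enough, and then to run a double induction (on $p$ and on the number of factors already transported) guaranteeing that after all transpositions the element lies in some $U_0U_1\cdots U_r$. Once this product estimate is in hand, the criterion recalled in the first paragraph shows that $\LA[G]$ is a topological group, which completes the proof.
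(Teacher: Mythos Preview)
Your framework and the inversion argument are correct and match the paper almost exactly: your chain $V^{(m)}:=U_m$, $V^{(k)}U_k\subseteq U_kV^{(k+1)}$, $W_m\subseteq V^{(0)}$ is the paper's sequence $V_m^{(i)}$ (with $i=m-k$), and your induction $W_m\cdots W_0\subseteq U_0\cdots U_m$ is a repackaging of the paper's $\Pi_k$-chain for inversion.

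The genuine gap is the multiplication step, which you leave as an unfinished sketch. Your instinct that the correction terms must be pre-absorbed is right, but the double induction you anticipate is unnecessary; the paper's device is cleaner. In the paper's notation (target $\LP_{n\in\w}W_n$), one first picks $U_n\in\BB_n$ with $U_nU_n\subset W_n$ and then runs your construction with the extra constraint $V_n^{(i)}\subset U_n$ for every $0\le i\le n$. The key observation is that the elementary relations $V_n^{(n-k)}U_k\subset U_kV_n^{(n-k-1)}$ combine (by a one-line induction on $n$) into a \emph{block} relation
\[
\Big(\LP_{k<n\le m}V_n^{(n-k)}\Big)\cdot U_k\;\subset\;U_k\cdot\LP_{k<n\le m}V_n^{(n-k-1)}\,,
\]
so an entire tail of $V$-factors slides through a single $U_k$ at the cost of decrementing every superscript by one. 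With this in hand the product estimate reduces to a single chain $\Pi_0\subset\Pi_1\subset\cdots\subset\Pi_{m+1}$, where
\[
\Pi_k=\Big(\LP_{0\le n<k}W_n\Big)\cdot\Big(\LP_{k\le n\le m}V_n^{(n-k)}\Big)\cdot\Big(\LP_{k\le n\le m}U_n\Big),
\]
and $\Pi_k\subset\Pi_{k+1}$ follows from one application of the block relation together with $V_k^{(0)}U_k=U_kU_k\subset W_k$. The bookkeeping device is therefore not a double induction but the single counter $i$ on $V_n^{(i)}$, recording how many $U$'s that factor still has to cross.
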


\begin{proof} Since the tower $(G_n)_{n\in\w}$ satisfies $\PTA$, each topological group $G_n$ admits a neighborhood base $\BB_n$ at $e$ that consists of open sets $U=U^{-1}$ such that for every $m\ge n$ and a neighborhood $V\subset G_m$ of $e$ there is a neighborhood $W\subset G_m$ of $e$ such that $WU\subset UV$.

In order to show that the semitopological group $\LA[G]$ is a topological group, it suffices to check the continuity of the multiplication and of the inversion at the neutral element $e$.
\smallskip

The continuity of the multiplication at $e$ will follow as soon as for every neighborhood $\LP_{n\in\w}W_n\in\LA[\BB]_e$ we find a neighborhood $\LP_{n\in\w}V_n\in\LA[\BB]_e$ such that $\big(\LP_{n\in\w}V_n\big)^2\subset \big(\LP_{n\in\w}W_n\big)$.

For every $n\in\w$ find a neighborhood $U_n\in\BB_n$ with $U_nU_n\subset W_n$. Put 
$V_n^{(0)}=U_n$ and using PTA, for every $0<i\le n$ find a neighborhood $V_n^{(i)}\in\BB_n$ such that
\begin{itemize}
\item $V_n^{(i)}\subset U_n$;
\item $V_n^{(i)}U_{n-i}\subset U_{n-i} V_n^{(i-1)}$.
\end{itemize} Observe that for $i=n-k$ the latter inclusion yields
\begin{equation}\label{PTA2a}
V_n^{(n-k)}U_k\subset U_kV_n^{(n-k-1)}.
\end{equation}

We claim that $\big(\LP_{n\in\w}V_n^{(n)}\big)^2\subset \big(\LP_{n\in\w}W_n\big)$.
Since $V_n^{(n)}\subset U_n$, this inclusion will follow as soon as we check that
\begin{equation}\label{PTA2c}
\LP_{n\le m}V_n^{(n)}\cdot \LP_{n\le m}U_n\subset\LP_{n\le m}W_n
\end{equation} 
for every $m>0$.

For every non-negative integer $k\le m+1$ consider the subset
$$\Pi_k=\LP_{0\le n<k}W_n\cdot\LP_{k\le n\le m}V_n^{(n-k)}\cdot\LP_{k\le n\le m}U_n$$of the group $G_m$. Observe that (\ref{PTA2c}) is equivalent to  the inclusion $\Pi_0\subset\Pi_{m+1}$. The last inclusion will follow as soon as we check that $\Pi_{k}\subset \Pi_{k+1}$ for every $k\le  m$.

By induction on $k$ we can deduce from (\ref{PTA2a}) the following inclusion:
\begin{equation}
\label{PTA2d}
(\LP_{k<n\le m}V_n^{(n-k)})\cdot U_k\subset U_k\cdot\LP_{k<n\le m}V_n^{(n-k-1)}.
\end{equation}

This inclusion combined with $V_k^{(0)}U_k=U_kU_k\subset W_k$ yields the desired inclusion:
$$
\begin{aligned}
\Pi_{k}&=\LP_{0\le n<k} W_n\cdot \LP_{k\le n\le m}V_n^{(n-k)}\cdot \LP_{k\le n\le m}U_n=\\
&=\Big(\LP_{0\le n<k} W_n\Big)\cdot V_k^{(0)}\cdot \Big(\LP_{k<n\le m}V_n^{(n-k)}\Big)\cdot U_k\cdot 
\LP_{k<n\le m}U_n\\
&\subset\Big(\LP_{0\le n<k}W_n\Big)\cdot V_k^{(0)}\cdot \Big(U_k\cdot\LP_{k<n\le m}V_n^{(n-k-1)}\Big)\cdot\LP_{k<n\le m}U_n\\
&\subset\Big(\LP_{0\le n<k}W_n\Big)\cdot W_k\cdot \LP_{k<n\le m}V_n^{(n-k-1)}\cdot\LP_{k<n\le m}U_n=\Pi_{k+1}.
\end{aligned}
$$
\smallskip

Next, we verify the continuity of the inversion at $e$.
Given a set $\LP_{n\in\w}U_n\in\LA[\BB]_e$, we need to find a set 
$\LP_{n\in\w}V_n\in\LA[\BB]_e$ such that $\Big(\LP_{n\in\w}V_n\Big)^{-1}\subset\LP_{n\in\w}U_n$. 

For every $n\in\w$ put $V_n^{(0)}=U_n$ and using $\PTA$, for every $0<i\le n$ choose a neighborhood $V_n^{(i)}\in\BB_n$ such that $V_n^{(i)}U_{n-i}\subset U_{n-i}V_n^{(i-1)}$. The so-defined sets satisfy the inclusions
\begin{equation}\label{PTA3a}
V_n^{(n-k)}U_k\subset U_kV_n^{(n-k-1)},\quad 0\le k<n.
\end{equation}

We claim that $$\big(\LP_{n\in\w}V_n^{(n)}\Big)^{-1}\subset \LP_{n\in\w}U_n.$$This inclusion will follow as soon as we check that 
\begin{equation}\label{PTA3b}
 \big(\LP_{n\le m}V_n^{(n)}\Big)^{-1}=\RP_{n\le m}V_n^{(n)}\subset \LP_{n\le m}U_n
\end{equation}
for all $m\in\w$. The left-hand equality follows from the symmetry of the neighborhoods $V_n^{(n)}\in\BB_n$.

For the proof of the right-hand inclusion, for every $k\le m+1$ consider the subset
$$\Pi_k=\LP_{0\le n<k}U_n\cdot\RP_{k\le n\le m}V_n^{(n-k)}$$ of the group $G_m$, and observe that 
(\ref{PTA3b}) is equivalent to the inclusion $\Pi_0\subset\Pi_{m+1}$. So it suffices to check that $\Pi_k\subset\Pi_{k+1}$ for every $k\le m+1$. 

By induction on $k\le m+1$ we can show that (\ref{PTA3a}) implies
$$\Big(\RP_{k<n\le m}V_n^{(n-k)}\Big)\cdot U_k\subset U_k\cdot \RP_{k<n\le m}V_n^{(n-k-1)}.$$

Now the inclusion $\Pi_k\subset\Pi_{k+1}$ can be seen as follows:
$$
\begin{aligned}
\Pi_k&=\LP_{0\le n<k}U_n\cdot\RP_{k\le n\le m}V_n^{(n-k)}=
\LP_{0\le n<k}U_n\cdot \Big(\RP_{k<n\le m}V_n^{(n-k)}\Big)\cdot V_k^{(0)}=\\
&=\LP_{0\le n<k}U_n\cdot \Big(\RP_{k<n\le m}V_n^{(n-k)}\Big)\cdot U_k\subset\Big(\LP_{0\le n<k}U_n\Big)\cdot U_k\cdot \RP_{k<n\le m}V_n^{(n-k-1)}=\Pi_{k+1}.
\end{aligned}
$$
\end{proof}

\section{Balances triples and towers of groups}\label{s1:SIN}

In this section we introduce another condition guaranteeing that the topology of the direct limit $\glim G_n$ of a tower  of topological  groups $(G_n)_{n\in\w}$ coincides with the topologies $\LA[\tau]$, $\RA[\tau]$, $\LR[\tau]$ and $\RL[\tau]$.
\smallskip

Let us observe that a tower of topological groups $(G_n)_{n\in\w}$ satisfies $\PTA$ if each group $G_n$, $n\in\w$, is {\em balanced}. The latter means that $G_n$ has a neighborhood base at $e$ consisting of $G$-invariant neighborhoods, see \cite[p.69]{AT}. 

We define a subset $U\subset G$ of a group $G$ to be {\em $H$-invariant} for a subgroup $H\subset G$ if $xUx^{-1}=U$ for all $x\in H$. Observe that for any subset $U\subset G$ the set
$$\sqrt[H]{U}=\{x\in G\rozd x^H\subset U\}$$ is the largest $H$-invariant subset of $U$. Here $x^H=\{hxh^{-1}\rozd h\in H\}$ stands for the conjugacy class of a point $x\in G$. 

Observe that a topological group $G$ is balanced if and only if for every neighborhood $U\subset G$ of $e$ the set $\sqrt[G]{U}$ is a neighborhood of $e$.

\begin{definition} A triple $(G,\Gamma,H)$ of topological groups $H\subset \Gamma\subset G$ is called {\em balanced} if for any neighborhoods $V\subset \Gamma$ and $U\subset G$ of the neutral element $e$ of $G$ the product $V\cdot\sqrt[H]{U}$ is a neighborhood of $e$ in $G$. 

A tower of topological groups $(G_n)_{n\in\w}$ is called {\em balanced\/} if each triple  $(G_{n+2},G_{n+1},G_n)$, $n\in\w$, is balanced.
\end{definition}

\begin{theorem}\label{t4.2} If a tower of topological groups $(G_n)_{n\in\w}$ is balanced, then the semitopological group $\LA[G]$ is a topological group and hence all the conditions (1) through (7) of Theorem~\textup{\ref{t2.2}} hold. In particular the topology of $\glim G_n$ coincides with any of the topologies: $\LA[\tau]$, $\RA[\tau]$, $\LR[\tau]$, $\RL[\tau]$.
\end{theorem}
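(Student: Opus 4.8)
The plan is to follow the template of the proof of Theorem~\ref{t3.2}. By Theorem~\ref{t2.2}, condition (1) --- that $\LA[G]$ is a topological group --- implies all of (1) through (7), so it suffices to upgrade the semitopological group $\LA[G]$ to a topological group; since it is already semitopological, I only need to verify the continuity of the multiplication and of the inversion at the neutral element $e$. Concretely, given a basic neighborhood $\LP_{n\in\w}W_n\in\LA[\BB]_e$, I must produce a basic neighborhood $\LP_{n\in\w}V_n\in\LA[\BB]_e$ with $\big(\LP_{n\in\w}V_n\big)^2\subset\LP_{n\in\w}W_n$ (for multiplication) and, using symmetry of the $V_n$, with $\big(\LP_{n\in\w}V_n\big)^{-1}=\RP_{n\in\w}V_n\subset\LP_{n\in\w}W_n$ (for inversion). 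Both inclusions amount to rewriting a product whose factor indices do not increase from left to right as one whose indices do increase, i.e.\ to sliding low-index factors leftward past high-index factors.

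The mechanism for such sliding is invariance: if a set $S\subset G$ is $G_k$-invariant then $gS=Sg$ for every $g\in G_k$, so a factor lying in $S$ commutes past any factor from $G_k$ while remaining in $S$. Thus to move a level-$j$ factor leftward past all factors of higher level it would suffice that each higher-level neighborhood $V_i$ ($i>j$) be $G_j$-invariant; demanding this for all $j$ means each $V_n$ should be $G_{n-1}$-invariant. Such invariant neighborhoods need not exist, and this is precisely the defect that the balanced hypothesis repairs --- but only with a two-level gap. Indeed, balancedness of the triple $(G_n,G_{n-1},G_{n-2})$ guarantees that $V\cdot\sqrt[G_{n-2}]{U}$ is a neighborhood of $e$ in $G_n$ for small $V\subset G_{n-1}$ and $U\subset G_n$: every point of a suitable neighborhood of $e$ in $G_n$ is a product of a \emph{movable} factor in the intermediate group $G_{n-1}$ and a $G_{n-2}$-\emph{invariant core} contained in $U$. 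The core commutes past every $G_k$ with $k\le n-2$, while the movable factor, lying one level down, must be pushed into lower levels and factored again.

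Accordingly, I would construct the $V_n$ by a downward recursion in the level, applying the balanced triples to peel off a $G_{n-2}$-invariant core from $V_n$, then a $G_{n-3}$-invariant core from the residual $G_{n-1}$-factor, and so on down to the bottom of the tower, so that $V_n$ becomes a product of invariant cores of strictly decreasing invariance level together with ever-smaller residuals; the nested auxiliary choices $V_n^{(i)}$ of the proof of Theorem~\ref{t3.2} would be mirrored here both to create the room needed at each level and to ensure $V_nV_n\subset W_n$. With the $V_n$ so chosen, the target inclusions would be verified by the same telescoping device as in Theorem~\ref{t3.2}: fix $m$, introduce intermediate products $\Pi_k$ interpolating between the given product and the desired product $\LP_{0\le n\le m}W_n$, and check $\Pi_k\subset\Pi_{k+1}$ --- except that the $\PTA$ relation $WU\subset UV$ driving each step is now replaced by the invariant-core/movable-factor factorizations supplied by balancedness.

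The main obstacle is the bookkeeping forced by the two-level gap. A $G_j$-invariant core slides cleanly only past factors of level $\ge j+2$; passing the intermediate level $j+1$ is not free, and the movable residuals created in the factorizations live exactly at those intermediate levels. The crux is therefore to organize the recursion so that every residual factor is always driven to a \emph{strictly lower} level before it must be commuted again, which both terminates the recursion and guarantees that, after all commutations, each level-$n$ slot has accumulated only a bounded product absorbable into $W_n$. Designing the neighborhoods $V_n$ --- and the depth of the nested $V_n^{(i)}$ --- so that this downward migration of residuals closes up, leaving a final product genuinely inside $\LP_{n\in\w}W_n$, is the delicate heart of the argument.
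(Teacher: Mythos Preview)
Your plan captures the right mechanism --- factor into a movable residual times an invariant core, then telescope as in Theorem~\ref{t3.2} --- but you are making the construction harder than the paper does. You propose a \emph{full} downward peeling (a $G_{n-2}$-invariant core from $V_n$, then a $G_{n-3}$-invariant core from the $G_{n-1}$-residual, and so on), with nested auxiliary choices $V_n^{(i)}$ mirroring the $\PTA$ proof. The paper instead applies balancedness only \emph{once} per level: after shrinking so that $W_nW_n\subset U_n$ and setting $Z_n=\sqrt[G_{n-2}]{W_n}$, it chooses $V_n\subset W_{n+1}$ satisfying the single relation $V_n^2\subset V_{n-1}\cdot Z_n$, which the balanced triple $(G_n,G_{n-1},G_{n-2})$ provides directly. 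No $V_n^{(i)}$ are needed.

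The point is that the two-level gap you flag as the main obstacle never actually arises, because the telescoping is arranged so that $Z_n$ never has to cross a level-$(n-1)$ factor. In the step $\Pi_k\subset\Pi_{k+1}$ one replaces $V_{m-k+1}^2$ by $V_{m-k}Z_{m-k+1}$; the core $Z_{m-k+1}$, being $G_{m-k-1}$-invariant, commutes in a single move past the whole block $\LP_{n<m-k}V_n\subset G_{m-k-1}$ and comes to rest immediately to the \emph{left} of $V_{m-k}$, where it is absorbed into the accumulating tail $\LP Z_{n+1}V_n$ --- it is never asked to pass $V_{m-k}$. Meanwhile the freshly produced $V_{m-k}$ merges with the existing one to form $V_{m-k}^2$, and the recursion continues. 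The inversion argument is handled the same way via $V_n^2\subset W_n\cap(V_{n-1}Z_n)$ and a short induction. Your iterated-peeling scheme would probably close up as well, but at the cost of exactly the bookkeeping you are worried about; the paper sidesteps it entirely by the shape of the $\Pi_k$.
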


 \begin{proof} In order to show that the semitopological group $\LA[G]$ is a topological group, it suffices to check the continuity of the multiplication and of the inversion at the neutral element $e$.
\smallskip

In order to check the continuity of multiplication at $e$, fix a neighborhood $\LP_{n\in\w}U_n\in\LA[\BB]_e$.
For every $n\in\w$, find a symmetric neighborhood $W_n$ of $e$ in the group $G_n$ such that $W_n\cdot W_n\subset U_n$ and let $$Z_n=\sqrt[G_{n-2}]{W_n}=\{x\in G_n\rozd x^{G_{n-2}}\subset W_n\}$$ be the largest $G_{n-2}$-invariant subset of $W_n$ (here we assume that $G_{k}=\{e\}$ for $k<0$). 

Let $V_0=U_0\cap W_1$ and $V_1\subset G_1$ be a symmetric neighborhood of $e$ such that $V_1^2\subset W_1$. Next, for each $n\ge 2$ by induction choose a neighborhood $V_n\subset G_n$ so that
\begin{itemize}
\item[(a)] $V_{n}^2\subset V_{n-1}\cdot Z_{n}$, and
\item[(b)] $V_{n}\subset W_{n+1}$,
\end{itemize}
The condition (a) can be satisfied because the triple $(G_{n},G_{n-1},G_{n-2})$ is  balanced according to our hypothesis.

We claim $\Big(\LP_{n\in\w} V_n\Big)^2\subset\LP_{n\in\w}U_n$. This inclusion will follow as soon as we check that 
$$(\LP_{n\le m}V_n)\cdot (\LP_{n\le m}V_n)\subset\LP_{n\le m+1}U_n$$
for every $m\ge 2$.

For every $1\le k\le m$ consider the subset
$$\Pi_k=(\LP_{n\le m-k}V_n)\cdot V_{m-k+1}^2\cdot(\LP_{n\le m-k}V_n)\cdot (\LP_{m-k< n<m} Z_{n+1}V_n)\cdot V_m.$$

We claim that $\Pi_k\subset\Pi_{k+1}$. Indeed,
$$
\begin{aligned}
\Pi_k&=\Big(\LP_{n\le m-k}V_n\Big)\cdot V_{m-k+1}^2\cdot\Big(\LP_{n\le m-k}V_n\Big)\cdot \Big(\LP_{m-k< n<m} Z_{n+1}V_n\Big)\cdot V_m   \subset\\
&\Big(\LP_{n\le m-k}V_n\Big)\cdot V_{m-k}\cdot Z_{m-k+1}\cdot\Big(\LP_{n<m-k}V_n\Big)\cdot V_{m-k}\cdot \Big(\LP_{m-k< n<m} Z_{n+1}V_n\Big)\cdot V_m=\\
&\Big(\LP_{n<m-k}V_n\Big)\cdot V^2_{m-k}\cdot\Big(\LP_{n<m-k}V_n\Big)\cdot Z_{m-k+1}\cdot V_{m-k}\cdot\Big(\LP_{m-k<n<m}Z_{n+1}V_n\Big)\cdot V_m=\\
&\Big(\LP_{n<m-k}V_n\Big)\cdot V^2_{m-k}\cdot\Big(\LP_{n<m-k}V_n\Big)\cdot \Big(\LP_{m-k\le n<m}Z_{n+1}V_n\Big)\cdot V_m=
\Pi_{k+1}.
\end{aligned}
$$
Now we see that
$$\begin{aligned}
\big(\LP_{i\le m}V_i\big)^2&\subset(\LP_{i\le m-1}V_i\big)\cdot V^2_m\cdot  \big(\LP_{i\le m}V_i\big)=\Pi_1\subset\Pi_m=V_0V_1^2V_0\cdot\big(\LP_{0<n<m}Z_{n+1}V_n\big)V_m\subset\\
&U_0W_1W_1\cdot\big(\LP_{0<n<m}W_{n+1}W_{n+1}\big)W_{m+1}\subset U_0U_1\big(\LP_{0<n<m}U_{n+1}\big)\cdot U_{m+1}=\LP_{n\le m+1}U_n.
\end{aligned}
$$

Now we check that the inversion is continuous at $e$ with respect to the topology $\LA[\tau]$. Given any basic set $\LP_{n\in\w}W_n\in\LA[\BB]_e$, we need to find a basic set $\LP_{n\in\w}U_n\in\LA[\BB]_e$ such that $\big(\LP_{n\in\w}U_n\big)^{-1}\subset\LP_{n\in\w}W_n$. 

For every $n\in\w$ let $Z_{n+2}=\sqrt[G_{n}]{W_{n+2}}$ be the largest $G_n$-invariant subset of $W_{n+2}$. For each non-negative number $n<2$ pick a symmetric neighborhood $V_n\subset G_n$ such that $V_n^2\subset W_n$. For $n\ge 2$ by induction choose  a symmetric neighborhood $V_n\subset G_n$ of $e$ such that $V_n^2\subset W_n\cap(V_{n-1}\cdot Z_n)$. Such a neighborhood $V_n$ exists by the balanced property of the triple $(G_n,G_{n-1},G_{n-2})$. Finally, for every $n\in\w$ put  $U_n=V_n\cap V_{n+1}$.

We claim that $\big(\LP_{n\in\w}U_n\big)^{-1}\subset \LP_{n\in\w}W_n$. This inclusion will follow as soon as we check that $\RP_{n<m}U_n\subset \LP_{n\le m}W_n$ for every $m\in\w$. By induction we shall prove a bit more: 
\begin{equation}\label{eq:inv}
V_m\cdot\RP_{n<m}U_n\subset \LP_{n\le m}W_n
\end{equation} 
for every $m\in\IN$.  

For $m=1$ the inclusion (\ref{eq:inv}) is true: $V_1U_0\subset V_1^2\subset W_1\subset W_0W_1$. Assume that the inclusion (\ref{eq:inv}) has been proved for some $m=k\ge 1$.
Then $$
\begin{aligned}
V_{m+1}\cdot\RP_{n\le m}U_n&\subset V_{m+1}\cdot U_m\cdot\RP_{n<m}U_n\subset V_{m+1}^2\cdot\RP_{n<m}U_n\subset V_mZ_{m+1}\RP_{n<m}U_n=\\&=V_m\cdot \big(\RP_{n<m}U_n\big)\cdot Z_{m+1}\subset \big(\LP_{n\le m}W_n\big)\cdot Z_{m+1}\subset \LP_{n\le m+1}W_n,
\end{aligned}
$$which means that the inclusion (\ref{eq:inv}) holds for $m=k+1$.
\end{proof}

\section{Bi-balanced triples and towers of groups}\label{s2:SIN}

In this section we introduce the bi-balanced property of a tower $(G_n)$, which is weaker than the balanced property and implies that the semitopological group $\RL[G]$ is a topological group.

\begin{definition} A triple $(G,\Gamma,H)$ of topological groups $H\subset \Gamma\subset G$ is called {\em bi-balanced} if for any neighborhoods $V\subset \Gamma$ and $U\subset G$ of the neutral element $e$ of $G$ the product $\sqrt[H]{U}\cdot V\cdot\sqrt[H]{U}$ is a neighborhood of $e$ in $G$.

A tower of topological groups $(G_n)_{n\in\w}$ is called {\em bi-balanced\/} if each triple $(G_{n+2},G_{n+1},G_n)$, $n\in\w$, is bi-balanced.
\end{definition}

\begin{theorem}\label{t5.2} If a tower of topological groups $(G_n)_{n\in\w}$ is bi-balanced, then the identity map $\RL[G]\to\glim G_n$ is a homeomorphism and hence the topology of $\glim G_n$ coincides with the topology $\RL[\tau]$.
\end{theorem}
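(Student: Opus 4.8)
The plan is to deduce the statement from Theorem~\ref{t2.2}. Since the identity map $\RL[G]\to\glim G_n$ is always continuous by Proposition~\ref{p2.1}, and since conditions (6) and (7) of Theorem~\ref{t2.2} are equivalent, it suffices to verify condition (6): that $\RL[G]$ is a topological group. Here one gets a genuine simplification over Theorems~\ref{t3.2} and~\ref{t4.2}: the group $\RL[G]$ is already a quasitopological group, so its inversion is continuous and the only thing left to check is the \emph{joint} continuity of the multiplication, which by homogeneity need only be checked at the neutral element $e$. Concretely, I must produce, for every basic neighborhood $\RLP_{n\in\w}W_n\in\RL[\BB]_e$, a basic neighborhood $\RLP_{n\in\w}V_n\in\RL[\BB]_e$ with $(\RLP_{n\in\w}V_n)^2\subset\RLP_{n\in\w}W_n$. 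This single inclusion is the whole content, and it is where the bi-balanced hypothesis must enter.

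To choose the $V_n$, I would first replace each $W_n$ by a symmetric neighborhood $T_n$ a suitable power of which lies in $W_n$, and then set $Z_n=\sqrt[G_{n-2}]{T_n}$ to be the largest $G_{n-2}$-invariant subset of $T_n$, exactly as in Theorem~\ref{t4.2}. The essential point is that the bi-balanced property of the triple $(G_n,G_{n-1},G_{n-2})$ now lets me select a symmetric $V_n$ with the \emph{two-sided} containment $V_n^2\subset Z_n\cdot V_{n-1}\cdot Z_n$ (together with $V_n\subset T_{n+1}$), whereas balancedness only yielded the one-sided $V_n^2\subset V_{n-1}\cdot Z_n$. Putting the invariant factors $Z_n$ on both sides of $V_{n-1}$ is forced by the symmetry of the palindrome $\RLP_{n\le m}V_n=(\RP_{n\le m}V_n)(\LP_{n\le m}V_n)$.

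The core of the argument is the inclusion $(\RLP_{n\le m}V_n)^2\subset\RLP_{n\le m+1}W_n$, which I would prove by induction on $m$ through an interpolating family $\Pi_k$, $0\le k\le m$, running from the square on the left toward the target on the right, in the spirit of the families used in Theorems~\ref{t3.2} and~\ref{t4.2}. In its descending/ascending/descending/ascending form the square has its central junction equal to the ``peak'' $V_mV_m$. The inductive step applies the two-sided reduction $V_j^2\subset Z_jV_{j-1}Z_j$ at the current central level $j$, lowering the peak by one and emitting one invariant factor $Z_j$ to each side; the family $\Pi_k$ is designed so that, using the $G_{j-2}$-invariance of $Z_j$ to commute it past blocks of strictly smaller index, the emitted factors are transported symmetrically outward, each processed level being absorbed into $W_j$ and the index budget being shifted up by one (this is why the target is $\RLP_{n\le m+1}W_n$). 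The verification $\Pi_k\subset\Pi_{k+1}$ is then a direct, if lengthy, computation of the kind displayed in the proof of Theorem~\ref{t4.2}.

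The step I expect to be the main obstacle is precisely the bookkeeping at this central junction. Unlike the case of $\LA[G]$, where the analogous junction already places a \emph{low}-index factor next to the peak, so that the invariant sets slide out at once, here the peak $V_mV_m$ is flanked on both sides by the high-index factors $V_{m-1}$, and a single reduction does not by itself free the emitted $Z_m$'s. The delicate task is therefore to arrange the induction, and the relative sizes of the neighborhoods $Z_n$ and $V_n$, so that the two-sided invariant padding furnished by bi-balancedness is exactly what carries the high-index mass out to the two ends of the target palindrome, which are the only positions able to accommodate it. This is the point at which mere balancedness would be an unnecessarily strong hypothesis, and at which the symmetric, two-sided nature of the $\RLP$ construction has to be exploited in full.
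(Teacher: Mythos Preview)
Your overall strategy matches the paper's: reduce to Theorem~\ref{t2.2}(6), use that $\RL[G]$ is already quasitopological, and verify joint continuity of multiplication at $e$ by finding $V_n$ with $(\RLP_{n\in\w}V_n)^2\subset\RLP_{n\in\w}W_n$ via an interpolating family $\Pi_k$. You also correctly isolate the real obstacle: after the first reduction at the central peak the emitted invariant factors $Z_m$ are trapped between copies of $V_{m-1}\subset G_{m-1}$, whereas $Z_m$ is only $G_{m-2}$-invariant.

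The gap is that the reduction relation you propose, $V_n^2\subset Z_n V_{n-1} Z_n$, does not by itself resolve this obstacle, and your last paragraph stops short of saying how. The paper's device is to introduce an auxiliary sequence $\widetilde U_n\subset U_n$ (with $U_n^2\subset W_n$ and $Z_n=\sqrt[G_{n-2}]{U_n}$) satisfying the \emph{cube} condition $\widetilde U_n^3\subset Z_n\widetilde U_{n-1}Z_n$, and then to set $V_n=G_n\cap\widetilde U_{n+1}$ via an index shift. The interpolating family carries a floating central factor $\widetilde U_k$:
\[
\Pi_k=\Big(\RP_{k\le n<m}V_nZ_{n+1}\Big)\cdot\Big(\RLP_{n<k}V_n\Big)\cdot\widetilde U_k\cdot\Big(\RLP_{n<k}V_n\Big)\cdot\Big(\LP_{k\le n<m}Z_{n+1}V_n\Big).
\]
The step $\Pi_{k+1}\subset\Pi_k$ works because, after peeling one $V_k$ off each side of the central $\widetilde U_{k+1}$, the index shift gives $V_k\subset\widetilde U_{k+1}$, so $V_k\,\widetilde U_{k+1}\,V_k\subset\widetilde U_{k+1}^3\subset Z_{k+1}\widetilde U_kZ_{k+1}$; now $Z_{k+1}$ is $G_{k-1}$-invariant and the \emph{remaining} inner flank $\RLP_{n<k}V_n$ lies in $G_{k-1}$, so the commutation goes through cleanly. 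The cube, the index shift, and the extra central factor together are exactly the bookkeeping your sketch flags as ``delicate'' but leaves open; with only the square relation on $V_n$ and no central buffer, the $Z$'s never escape the $V_{m-1}$ flanks.
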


\begin{proof} By Theorem~\ref{t2.2}, it suffices to show that $\RL[G]$ is a topological group. Since $\RL[G]$ is a quasitopological group, it suffices to check the continuity of multiplication at the neutral element. Given a basic neighborhood $\RLP_{n\in\w}W_n\in\RL[\BB]_e$, we should find a neighborhood $\RLP_{n\in\w}V_n\in\RL[\BB]_e$ such that $\big(\RLP_{n\in\w}V_n\big)^2\subset \RLP_{n\in\w}W_n$. 

For every $n\in\w$, find a symmetric neighborhood $U_n\subset G_n$ of $e$ such that $U_n^2\subset W_n$ and let $Z_{n}=\sqrt[G_{n-2}]{U_{n}}$ be the maximal $G_{n-2}$-invariant subset of $U_{n}$ (here we assume that $G_k=\{e\}$ for $k<0$).
Let $\widetilde U_0=W_0$ and by induction for every $n\in\IN$ choose a  symmetric neighborhood $\widetilde U_n\subset U_n$ of $e$ such that $\widetilde U_n^3\subset Z_n\widetilde U_{n-1}Z_n$.
The choice of the neighborhood $\widetilde U_n$ is possible because the set $Z_n\widetilde U_{n-1}Z_n$ is a neighborhood of $e$ in the group $G_n$ by the bi-balanced property of the triple $(G_n,G_{n-1},G_{n-2})$. Finally, for every $n\in\w$ let $V_n=G_n\cap \widetilde U_{n+1}$.

We claim that $\RLP_{n\in\w}V_n$ is the required neighborhood with 
$\big(\RLP_{n\in\w}V_n\big)^2\subset \RLP_{n\in\w}W_n$. This inclusion will follow as soon as we check that 
\begin{equation}\label{SIN2b}
\big(\RLP_{n<m}V_n\big)^2\subset \RLP_{n\le m}W_n
\end{equation}
for all $m\in\w$.

For $m=0$ this inclusion is trivial. Assume that the inclusion (\ref{SIN2b}) has been proved for some $m=p\in\w$. We shall prove it for $m=p+1$. For every non-negative $k<m$ consider the subset
$$\Pi_{k}=\big(\RP_{k\le n<m}V_nZ_{n+1}\big)\cdot\big(\RLP_{n<k}V_n\big)\cdot \widetilde U_{k}\cdot \big(\RLP_{n< k}V_n\big)\cdot \big(\LP_{k\le n<m}Z_{n+1}V_n\big)$$
of the group $G_m$. The following chain of inclusions guarantees that $\Pi_{k+1}\subset\Pi_{k}$:
$$
\begin{aligned}
\Pi_{k+1}&=\big(\RP_{k<n<m}V_nZ_{n+1}\big)\cdot\big(\RLP_{n\le k}V_n\big)\cdot \widetilde U_{k+1}\cdot \big(\RLP_{n\le k}V_n\big)\cdot \big(\LP_{k<n<m}Z_{n+1}V_n\big)\subset\\
&\big(\RP_{k<n<m}V_nZ_{n+1}\big)\cdot V_k\cdot \big(\RLP_{n<k}V_n\big)\cdot V_k\cdot \widetilde U_{k+1}\cdot V_k\cdot \big(\RLP_{n<k}V_n\big)\cdot V_k\cdot  \big(\LP_{k<n<m}Z_{n+1}V_n\big)\subset\\
&\big(\RP_{k<n<m}V_nZ_{n+1}\big)\cdot V_k\cdot \big(\RLP_{n<k}V_n\big)\cdot \widetilde U_{k+1}^3\cdot \big(\RLP_{n<k}V_n\big)\cdot V_k\cdot  \big(\LP_{k<n<m}Z_{n+1}V_n\big)\subset\\
&\big(\RP_{k<n<m}V_nZ_{n+1}\big)\cdot V_k\cdot \big(\RLP_{n<k}V_n\big)\cdot Z_{k+1}\cdot\widetilde U_{k}\cdot Z_{k+1}\cdot \big(\RLP_{n<k}V_n\big)\cdot V_k\cdot  \big(\LP_{k<n<m}Z_{n+1}V_n\big)=\\
&\big(\RP_{k<n<m}V_nZ_{n+1}\big)\cdot V_k\cdot Z_{k+1}\cdot\big(\RLP_{n<k}V_n\big)\cdot\widetilde U_{k}\cdot \big(\RLP_{n<k}V_n\big)\cdot  Z_{k+1}\cdot V_k\cdot  \big(\LP_{k<n<m}Z_{n+1}V_n\big)=\\
&\big(\RP_{k\le n<m}V_nZ_{n+1}\big)\cdot \big(\RLP_{n<k}V_n\big)\cdot\widetilde U_{k}\cdot \big(\RLP_{n<k}V_n\big)\cdot  \big(\LP_{k\le n<m}Z_{n+1}V_n\big)=\Pi_{k}.
\end{aligned}
$$
Now we see that $$
\begin{aligned}
\big(\RLP_{n<m}V_n\big)^2&\subset \big(\RLP_{n<m}V_n\big)\cdot\widetilde U_m\cdot\big(\RLP_{n<m}V_n\big)=\Pi_{m}\subset\Pi_0=
\big(\RP_{n<m}V_nZ_{n+1}\big)\cdot\widetilde U_0\cdot \big(\LP_{n<m}Z_{n+1}V_n\big)\\
&\subset
\big(\RP_{n<m}U_{n+1}^2\big)\cdot \widetilde U_0\cdot \big(\LP_{n<m}U_{n+1}^2\big)\subset
\big(\RP_{n<m}W_{n+1}\big)\cdot W_0^2\cdot \big(\LP_{n<m}W_{n+1}\big)=\RLP_{n\le m}W_m.
\end{aligned}
$$
\end{proof}

\section{The independence of $\PTA$ and the balanced property}\label{PTA-SIN}

Looking at Theorems~\ref{t3.2} and \ref{t4.2} (which have the same conclusion) the reader can ask about the interplay between $\PTA$ and the balanced property. These two properties are independent.

First we present an example of a tower of topological groups $(G_n)_{n\in\w}$ that is balanced but does not satisfy $\PTA$.

Let $G=\HH_c(\IR)$ be the group of all homeomorphisms $h:\IR\to\IR$ having compact support $\supp(h)=\cl_\IR\{x\in\IR\rozd h(x)\ne x\}.$

The homeomorphism group $G=\HH_c(\IR)$ is endowed with the Whitney topology whose base at a homeomorphism $h\in \HH_c(\IR)$ consists of the sets 
$$B(h,\e)=\{f\in \HH_c(\IR)\rozd |f-h|<\e\}$$where $\e:\IR\to(0,1)$ runs over continuous positive functions on the real line.

It is well-known that the Whitney topology turns the homeomorphism group $G=\HH_c(\IR)$ into a topological group, see e.g., \cite{BMSY}. This group can be written as the countable union $G=\bigcup_{n\in\w}G_n$ of the closed subgroups
$$G_n=\big\{h\in G\rozd \supp(h)\subset[-n,n]\big\}.$$  Each subgroup $G_n$ can be identified with the group $\HH_+(\II_n)$ of orientation-preserving homeomorphisms of the closed interval $\II_n=[-n,n]$. The Whitney topology of the group $G$ induces on each subgroup $G_n$ the compact-open topology, generated by the sup-metric 
$\|f-h\|=\sup_{x\in\IR}|f(x)-h(x)|$.

In the following theorem we shall show that the topology of the direct limit $\glim G_n$ coincides with the topology  $\LA[\tau]$ on $G$ but the tower $(G_n)_{n\in\w}$ does not satisfy $\PTA$. This answers Problem~17.3 \cite{Glo06} of H.~Gl\"ockner.

\begin{theorem}\label{t6.1} \begin{enumerate}
\item The tower of the homeomorphism groups $\big(\HH_+(\II_n)\big)_{n\in\w}$ is balanced\textup{;}
\item the tower $\big(\HH_+(\II_n)\big)_{n\in\w}$ does not satisfy $\PTA$\textup{;}
\item The Whitney topology on $\HH_c(\IR)$ coincides with the topologies $\LA[\tau]$, $\RA[\tau]$, $\LR[\tau]$, $\RL[\tau]$ and those topologies coincide with the topology of the direct limit $\glim \HH_+(\II_n)$.
\end{enumerate}
\end{theorem}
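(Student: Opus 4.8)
The plan is to establish the three parts in the order listed, using Theorem~\ref{t4.2} to bridge from the balanced property in part (1) to the coincidence of topologies in part (3). The geometry of the interval homeomorphism groups $\HH_+(\II_n)$ drives everything: an element $h\in G_m$ is the identity outside $[-m,m]$, and conjugation $x\mapsto ghg^{-1}$ by $g\in G_n$ (with $n\le m$) transports the ``support behavior'' of $h$ by the homeomorphism $g$. The key quantitative fact is that conjugation by $g$ changes sup-distances from the identity in a controlled way on the region where $g$ itself is nontrivial, i.e.\ on $[-n,n]$; outside this region $g$ is the identity and conjugation acts trivially.

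For part (1), to verify that each triple $(G_{n+2},G_{n+1},G_n)$ is balanced I must show that for neighborhoods $V\subset G_{n+1}$ and $U\subset G_{n+2}$ of $e$, the product $V\cdot\sqrt[G_n]{U}$ is a neighborhood of $e$ in $G_{n+2}$. The set $\sqrt[G_n]{U}$ consists of those $h\in G_{n+2}$ whose entire $G_n$-conjugacy class lies in $U$; the plan is to show it contains a sup-ball of homeomorphisms supported in the ``collar'' $[-(n+2),-n]\cup[n,n+2]$ outside the range where $G_n$-conjugation can enlarge displacement. Concretely, if $h$ is the identity on $[-n,n]$ and moves points only slightly in the collar, then every $G_n$-conjugate $ghg^{-1}$ agrees with $h$ on $[-n,n]$ (since $g$ fixes the collar pointwise, being supported in $[-n,n]$) and stays uniformly close to the identity. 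Combining such an $h$ with an arbitrary small $V$-element supported in $[-(n+1),n+1]$ should cover a full sup-neighborhood of $e$ in $G_{n+2}$, because every small homeomorphism of $[-(n+2),n+2]$ factors as a small homeomorphism supported in $[-(n+1),n+1]$ composed with a small homeomorphism supported in the outer collar. This \emph{factorization across the collar} is the heart of the argument, and I expect it to be the main obstacle: one must produce the factorization continuously and with uniform control on the sup-norms, which requires an explicit cut-off/isotopy construction on the interval.

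For part (2), the failure of $\PTA$ is a negative statement, so the plan is to exhibit a specific obstruction. $\PTA$ would require, for a fixed basic $U\subset G_n$ and arbitrary $V\subset G_m$, a $W\subset G_m$ with $WU\subset UV$; rewriting this as $U^{-1}WU\subset V$ shows $\PTA$ forces $U^{-1}WU$ to shrink into arbitrarily small $V$. I would construct elements $w\in G_m$ arbitrarily close to $e$ whose $U$-conjugates $u^{-1}wu$ (for a suitable fixed $u\in U$) have displacement bounded below by a fixed constant: since conjugating a small translation-like bump by a homeomorphism $u$ that has large derivative somewhere can amplify displacement without bound as the bump is placed where $u$ stretches most, no choice of small $W$ can satisfy $U^{-1}WU\subset V$ once $V$ is taken smaller than this fixed constant. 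Pinning down such a $u$ with the requisite stretching and the family of $w$'s is the concrete work here.

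Finally, part (3) follows formally: part (1) gives that the tower is balanced, so Theorem~\ref{t4.2} applies and yields that $\LA[G]$ is a topological group, whence conditions (1)--(7) of Theorem~\ref{t2.2} hold. In particular the four topologies $\LA[\tau]$, $\RA[\tau]$, $\LR[\tau]$, $\RL[\tau]$ all coincide with the topology of $\glim G_n$. It remains only to identify this common topology with the Whitney topology on $\HH_c(\IR)$; since the Whitney topology is already a group topology on $G$ making each inclusion $G_n\to G$ continuous, it is dominated by the topology of $\glim G_n$, and conversely one checks directly that each $\glim$-basic neighborhood $\LP_{n\in\w}U_n$ contains a Whitney ball $B(e,\e)$ by choosing $\e$ to decay fast enough on each interval $[-n,n]$ so that the ``diagonal product'' structure of the directed product is absorbed. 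This last comparison is routine given the explicit description of both topologies.
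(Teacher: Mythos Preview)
Your plans for parts (1) and (3) match the paper's proofs closely: the collar factorization you describe (split a small $h\in G_{n+2}$ into a piece in $G_{n+1}$ agreeing with $h$ on $\II_n$ and extended linearly, times a piece supported in $\II_{n+2}\setminus\II_n$, which is automatically $G_n$-conjugation-invariant) is exactly what the paper does, and the comparison with the Whitney topology via a decaying $\e:\IR\to(0,\tfrac12)$ and an iterated truncation $h_n$ of $h$ is likewise the paper's argument. I would not call the last step ``routine'': the explicit estimate showing $g_n=h_{n-1}^{-1}\circ h_n\in U_n$ takes some care, but your outline is correct.

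Part (2), however, has a genuine gap. Your rewriting of $WU\subset UV$ as $U^{-1}WU\subset V$ is incorrect: the inclusion $WU\subset UV$ says only that for each $w\in W$ and $u\in U$ there \emph{exists} $u'\in U$ with $(u')^{-1}wu\in V$, not that $u^{-1}wu\in V$ for the same $u$. Consequently, exhibiting a fixed $u\in U$ and arbitrarily small $w$ with $\|u^{-1}wu-\id\|$ bounded below does \emph{not} contradict $\PTA$; one would need $\|(u')^{-1}wu-\id\|$ bounded below simultaneously for \emph{every} $u'\in U$, and your stretching argument gives no control over this.

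The paper circumvents this quantifier problem by evaluating at the boundary point $1\in\partial\II_1$, which is fixed by \emph{every} element of $G_1$ (not just those in $U$). After an inversion, $WU\subset UV$ with $U\subset G_1$ and $V=G_2\cap B(\e)$ forces $(G_1\cap B(\e))\cdot(G_2\cap B(\delta))\subset (G_2\cap B(\e))\cdot G_1$; but any element of the right-hand side sends $1$ into $(1-\e,1+\e)$, whereas one easily builds $g\in G_1\cap B(\e)$ and $f\in G_2\cap B(\delta)$ with $g\circ f(1)=1-\e$. The use of a point fixed by all of $G_1$ is the missing idea that makes the existential quantifier over $u'$ harmless.
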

 
\begin{proof} Let $G=\HH_c(\II)$ and $G_n=\HH_+(\II_n)\subset G$ for $n\in\w$. For a constant $\e>0$ the $\e$-ball $B(\id_\IR,\e)=\{f\in\HH_c(\IR)\rozd \|f-\id\|<\e\}\subset G$ centered at the identity homeomorphism $\id_\IR$ will be denoted by $B(\e)$. 
\smallskip

1. We need to show that for every $n\in\w$ the triple $(G_{n+2},G_{n+1},G_n)$ is balanced. This will follow as soon as we check that for every neighborhood $U\subset G_{n+1}$ of the identity homeomorphism $\id_\IR$ and any neighborhood $W\subset G_{n+2}$ of $\id_\IR$ the set $U\cdot\sqrt[G_n]{W}$ is a neighborhood of $\id_\IR$ in $G_{n+2}$. Since the Whitney topology on the subgroup $G_{n+2}=\HH_+(\II_{n+2})$ is generated by the sup-metric,  the neighborhood $W\subset G_{n+2}$ contains the $\e$-ball $G_{n+2}\cap B(\e)$ for some positive constant $\e<1$. The constant $\e$ can be chosen so small that $G_{n+1}\cap B(\e)\subset U$.

Consider the closed subgroup $$H=\{h\in G_{n+2}\rozd \supp(h)\subset\II_{n+2}\setminus\II_{n}\}$$ of $G_{n+2}$ and observe that $W\cap H\subset \sqrt[G_n]{W}$. Now it suffices to check that $U\cdot (W\cap H)$ contains the ball $G_{n+2}\cap B(\e/2)$. Take any homeomorphism $h\in G_{n+2}\cap B(\e/2)$ and observe that $h$ maps the interval 
$\II_n=[-n,n]$ into the interval $[-n-\e/2,n+\e/2]$. So, we can consider the homeomorphism $g\in G_{n+1}$, which is equal to $h$ on the interval $\II_n$ and is linear on the intervals $[n,n+1]$ and $[-n-1,-n]$. It is clear that $\|g-\id\|\le\|h-\id\|<\e/2$ and $\|g^{-1}-\id\|=\|g-\id\|$. Let $f=h\circ g^{-1}\in G_{n+2}$. The equality $g|\II_n=h|\II_n$ implies $f|\II_n=\id|\II_n$ and thus $f\in H$. It follows that $\|f-\id\|=\|h\circ g^{-1}-\id\|\le\|h\circ g^{-1}-g^{-1}\|+\|g^{-1}-\id\|<\e/2+\e/2=\e$. Now we see that the elements $g\in G_{n+1}\cap B(\e/2)\subset U$ and $f=g^{-1}\circ h\in H\cap B(\e)\subset \sqrt[G_n]{W}$ yield $h=g\circ f\subset U\cdot\sqrt[G_n]{W}$, which establishes the required inclusion $G_{n+2}\cap B(\e/2)\subset U\cdot\sqrt[G_n]{W}$.
\smallskip

2. Assuming that the tower $(G_n)_{n\in\w}$ satisfies $\PTA$, we can find a neighborhood $U\subset G_1$ of $\id_\IR$ such that for every neighborhood $V\subset G_2$ there is a neighborhood $W\subset G_2$ such that $WU\subset UV$.

Find $\e\in(0,1)$ such that $U\supset G_1\cap B(\e)$. Then for the neighborhood $V=G_2\cap B(\e)$ there is a neighborhood $W\subset G_2$ of $\id_\IR$ with $WU\subset UV$. Find a positive constant $\delta<\e$ with $G_2\cap B(\delta)\subset W$.
It follows that $(G_2\cap B(\delta))\cdot (G_1\cap B(\e))\subset WU\subset UV\subset G_1\cdot (G_2\cap B(\e))$ and after inversion, $(G_1\cap B(\e))\cdot(G_2\cap B(\delta))\subset (G_2\cap B(\e))\cdot G_1$. Take a homeomorphisms $f\in G_2\cap B(\delta)$ such that $f(1)=1-\delta/2$ and a homeomorphism $g\in G_1\cap B(\e)$ such that $g(1-\delta/2)=1-\e$. Then $g\circ f(1)=1-\e$ which is not possible as $g\circ f\in (G_2\cap B(\e))\cdot G_1\subset\{h\in G_2:|h(1)-1|<\e\}$.
\smallskip

3. Since the tower $(G_n)_{n\in\w}$ is balanced, the topology of $\glim G_n$ coincides with the topologies $\LA[\tau]$, $\RA[\tau]$, $\LR[\tau]$, and $\RL[\tau]$ according to Theorem~\ref{t4.2}. Since $\glim G_n$ carries the strongest group topology inducing the original topology on each group $G_n$, we conclude that the Whitney topology is weaker that the topology $\LA[\tau]$. In order to show that these two topologies coincide, it suffices to check that each basic neighborhood $\LP_{n\in\w}U_n$ of $e$ in the topology $\LA[\tau]$ is a neighborhood of $\id_\IR$ in the Whitney topology. Here for every $n\in\w$, $U_n$ is an open symmetric neighborhood of $\id_\IR$ in the group $G_n=\HH_+(\II_n)$. Since the Whitney topology on the subgroup $G_n$ is generated by the sup-metric, we can find a positive constant $\e_n<1/2$ such that $U_n\supset G_n\cap B(\e_n)$.

Choose a continuous function $\e:\IR\to(0,\tfrac12)$ such that 
\begin{equation}\label{E1}
\sup\{\e(x)\rozd x\in\II_{n}\setminus \II_{n-4}\}<\e_n/2\mbox{ \ for all $n\in\w$}.
\end{equation} 
Here we assume that $\II_k=\emptyset$ for all negative $k$.

The function $\e$ determines the neighborhood $B(\e)=\{h\in G\rozd  |h-\id_\IR|<\e\}$ of the identity map $\id_\IR$ in the Whitney topology. 

We claim that $B(\e)\subset\LP_{n\in\w}U_n$. 
Fix any homeomorphism $h\in B(\e)$ and for every $n\in\IN$ consider the homeomorphism $h_n\in G_{n}$ such that $h_n|\II_{n-1}=h|\II_{n-1}$ and $h_n$ is linear on the intervals $[n,n+1]$ and $[-n-1,-n]$. For $n\le 0$ we put $h_n=\id_\IR$.
It is clear that $h_{m}=h$ for some $m\in\IN$.

For every $n\in\w$ consider the homeomorphism $g_{n}=h_{n-1}^{-1}\circ h_{n}\in G_n$. Then $h=h_{m}=\LP_{n\le m}g_n$. It remains to prove that each homeomorphism $g_n$ belongs to the neighborhood $U_n$. This will follow as soon as we check that $|g_n(x)-x|<\e_n$ for any $x\ne g_n(x)$.

Since $h_{n-1}|\II_{n-2}=h_{n}|\II_{n-2}=h|\II_{n-2}$, we conclude that $x\in \II_{n}\setminus\II_{n-2}$. 
It follows from $h_{n}\in G_n\cap B(1/2)$ that the point $y=h_{n}(x)$ belongs to the set $\II_{n}\setminus\II_{n-3}$. Since $h_{n-1}\in G_{n-1}\cap B(1/2)$, the point $z=h_{n-1}^{-1}(y)$ belongs to $\II_{n}\setminus\II_{n-4}$. 

We claim that 
\begin{equation}\label{H1}
|h_{n-1}(z)-z|<\e_{n}/2.
\end{equation} If $z\in \II_{n-2}\setminus\II_{n-4}$, then 
$|h_{n-1}(z)-z|=|h(z)-z|<\e(z)\le \e_{n}/2$ by the condition (\ref{E1}) from the definition of the function $\e$. If $z\in \II_{n-1}\setminus\II_{n-2}$, then the linearity of $h_{n-1}$ on the two intervals composing the set $\II_{n-1}\setminus\II_{n-2}$ implies that $$|h_{n-1}(z)-z|\le\max_{t\in\partial\II_{n-2}}|h(t)-t|<\max_{t\in\partial\II_{n-2}}\e(t)\le
\e_{n}/2$$by the definition of the function $\e$. Here $\partial\II_{k}=\{k,-k\}$ stands for the boundary of the interval $\II_{k}=[-k,k]$ in $\IR$. 

By a similar argument we can prove the inequality
\begin{equation}\label{H2}
|h_{n}(x)-x|<\e_{n}/2.
\end{equation}

Unifying (\ref{H1}) and (\ref{H2}) we obtain the desired inequality:
$$
\begin{aligned}
|g_{n}(x)-x|&=|h_{n-1}^{-1}\circ h_{n}(x)-x|\le |h_{n-1}^{-1}\circ h_{n}(x)-h_{n}(x)|+|h_{n}(x)-x|=\\
&=|z-h_{n-1}(z)|+|h_{n}(x)-x|<\frac12\e_{n}+\frac12\e_n=\e_n.
\end{aligned}
$$
\end{proof}


Next, we present an example of a tower $(G_n)_{n\in\w}$ that satisfies $\PTA$ but is not (bi-) balanced. 

\begin{example} Let $(e_n)_{n\in\w}$ be an orthonormal basis of the separable Hilbert space $l_2$ and $\mathcal B(l_2)$ be the Banach algebra of bounded linear operators on $l_2$. For every $n\in\IN$ let $G_n$ be the subgroup of $\mathcal B(l_2)$ consisting of invertible linear operators $T:l_2\to l_2$ such that 
\begin{itemize}
\item $Te_0\in (0,+\infty)\cdot e_0$;
\item $Te_i\in e_i+\IR\cdot e_0$ for all $1\le i\le n$;
\item $Te_i=e_i$ for all $i>n$.
\end{itemize}

The tower $(G_n)_{n\in\IN}$ satisfies $\PTA$ because each group $G_n$ is locally compact, see \cite{TSH}, \cite{HSTH}.
On the other hand, for every $n\in\IN$ the triple $(G_n,G_{n+1},G_{n+2})$ is not bi-balanced. The reason is that for the neighborhood $W=\{T\in G_{n+2}:Te_{n+2}\in e_{n+2}+(-1,1)e_0\}$ of the identity $\id$ in $G_{n+2}$ the set $Z=\sqrt[G_n]{W}$ lies in the subgroup $G_{n+1}$. Then for each neighborhood $V\subset G_{n+1}$, the product $ZVZ\subset G_{n+1}$ fails to be a neighborhood of $\id$ in the group $G_{n+2}$.
\end{example}

It is clear that each balanced triple of groups is bi-balanced. The converse implication is not true.

\begin{example} In the group $G=GL(3,\IR)$ of non-degenerated $3\times 3$-matrices consider the subgroups
$$
\Gamma=\left\{\left(\begin{array}{ccc}
a_{11}&a_{12}&0\\
a_{21}&a_{22}&0\\
0&0&a_{33}\end{array}\right)\in G\right\}\mbox{ \ \ and \ \ }
H=\left\{\left(\begin{array}{ccc}
a_{11}&0&0\\
0&1&0\\
0&0&1\end{array}\right)\in G\right\}.
$$
It is easy to check that the triple $(G,\Gamma,H)$ is bi-balanced but not balanced.
\end{example}

\section{Direct limits in the category of uniform spaces}\label{s:ulim}

In this section we shall discuss the notion of the direct limit in the category of uniform spaces and their uniformly continuous maps. In Section~\ref{s:ulim-top} we shall apply those results to show that for a tower $(G_n)_{n\in\w}$ of topological groups the topologies $\LA[\tau]$ and $\RA[\tau]$ on the union $G=\bigcup_{n\in\w}G_n$ are generated by uniformities of direct limits of the groups $G_n$ endowed with the left and right uniformities. 

Fundamenta of the theory of uniform spaces can be found in \cite[Ch.8]{En}. Uniformities on groups are thoroughly discussed in \cite{RD} and \cite[\S1.8]{AT}. In the sequel, for a uniform space $X$ by $\U_X$ we shall denote the uniformity of $X$.

Let $$X_0\to X_1\to X_2\to\cdots
$$be a sequence of uniform spaces and their injective uniformly continuous maps. We shall identify each space $X_n$ with a subset of the uniform space $X_{n+1}$, carrying its own uniformity, which is stronger than that inherited from $X_{n+1}$. By the uniform direct limit $\ulim X_n$ of the sequence of uniform spaces $(X_n)_{n\in\w}$ we understand the union $X=\bigcup_{n\in\w}X_n$ endowed with the strongest (not necessarily separated) uniformity turning the identity inclusions $X_n\to X$, $n\in\w$, into uniformly continuous maps. 

A sequence
$$X_0\to X_1\to X_2\to\cdots$$ of uniform spaces is called a {\em tower} of uniform spaces if each uniform space $X_n$ is a subspace of the uniform space $X_{n+1}$, so  the identity inclusion $X_n\to X_{n+1}$ is a uniform embedding. 

The uniformity of the uniform direct limit $\ulim X_n$ of a tower $(X_n)_{n\in\w}$ of uniform spaces was described in \cite{BaR} with help of uniform pseudometrics. 

Let us recall that a pseudometric on a uniform space $Y$ is {\em uniform} if for every $\e>0$ the set $$\{d<\e\}:=\{(x,y)\in Y\rozd d(x,y)<\e\}$$ belongs to the uniformity $\U_Y$ of $Y$. By \cite[8.1.10]{En}, the uniformity $\U_Y$ of a uniform space $Y$ is generated by the family $\PM_Y$ of all uniform pseudometrics on $Y$ in the sense that the sets $\{d<1\}$, $d\in\PM_Y$, form a base of the uniformity $\U_Y$. 

Let $(X_n)_{n\in\w}$ be a tower of uniform spaces. A sequence of pseudometric $(d_n)_{n\in\w}\in\tprod_{n\in\w}\PM_{X_n}$
is called {\em monotone} if $d_n\le d_{n+1}|X_n^2$ for every $n\in\w$. Let
$$\mprod_{n\in\w}\PM_{X_n}=\{(d_n)_{n\in\w}\in\tprod_{n\in\w}\PM_{X_n}\rozd (d_n)_{n\in\w}\mbox{ is monotone}\}$$be the subspace of Cartesian product, consisting of monotone sequences of uniform pseudometrics on the uniform spaces $X_n$.

A family $\A\subset\mprod\limits_{n\in\w}\PM_{X_n}$ is defined to be {\em adequate} if 
for each sequence of entourages $(U_n)_{n\in\w}\in\prod_{n\in\w}\U_{X_n}$ there is a monotone sequence of uniform pseudometrics $(d_n)_{n\in\w}\in\A$ such that $\{d_n<1\}\subset U_n$ for all $n\in\w$.

The following proposition proved in \cite{BaR} shows that adequate families exist.

\begin{proposition} For any tower $(X_n)_{n\in\w}$ of uniform spaces the family $\A=\mprod\limits_{n\in\w}\PM_{X_n}$ is adequate.
\end{proposition}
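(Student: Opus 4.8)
The plan is to reduce the statement to a single extension lemma for uniform pseudometrics and then assemble the monotone sequence by a one-step induction. Fix a sequence of entourages $(U_n)_{n\in\w}\in\prod_{n\in\w}\U_{X_n}$. Since, by \cite[8.1.10]{En}, the sets $\{d<1\}$ with $d\in\PM_{X_n}$ form a base of the uniformity $\U_{X_n}$, for every $n\in\w$ I can choose a uniform pseudometric $\sigma_n\in\PM_{X_n}$ with $\{\sigma_n<1\}\subset U_n$; replacing $\sigma_n$ by $\min\{\sigma_n,1\}$ (which does not change the set $\{\sigma_n<1\}$ and remains a uniform pseudometric) I may assume each $\sigma_n$ is bounded by $1$.

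First I would isolate the crucial ingredient, the \emph{domination lemma}: for a subspace $X_n$ of the uniform space $X_{n+1}$ and a bounded uniform pseudometric $d$ on $X_n$ there is a uniform pseudometric $\bar d$ on $X_{n+1}$ with $\bar d|X_n^2\ge d$. Granting this, the construction is immediate. Put $d_0=\sigma_0$ and, assuming $d_n\in\PM_{X_n}$ with $\{d_n<1\}\subset U_n$ has been built, use the lemma to find $\bar d_n\in\PM_{X_{n+1}}$ with $\bar d_n|X_n^2\ge d_n$ and set $d_{n+1}=\max\{\sigma_{n+1},\bar d_n\}$. The maximum of two uniform pseudometrics is again a uniform pseudometric, because $\{\max\{\sigma_{n+1},\bar d_n\}<\e\}=\{\sigma_{n+1}<\e\}\cap\{\bar d_n<\e\}$ is an entourage of $X_{n+1}$; moreover $\{d_{n+1}<1\}\subseteq\{\sigma_{n+1}<1\}\subset U_{n+1}$ and $d_{n+1}|X_n^2\ge\bar d_n|X_n^2\ge d_n$. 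Thus $(d_n)_{n\in\w}$ is a monotone sequence of uniform pseudometrics, that is $(d_n)_{n\in\w}\in\A$, with $\{d_n<1\}\subset U_n$ for every $n$, which is exactly the adequacy of $\A$.

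The main obstacle is the domination lemma, and I would prove it by the standard metrization machinery for uniform spaces. Since $X_n$ carries the subspace uniformity, each ball $\{d<4^{-k}\}$ equals $W_k\cap X_n^2$ for some $W_k\in\U_{X_{n+1}}$. Using the uniformity axioms I would then choose recursively symmetric entourages $\tilde W_k\in\U_{X_{n+1}}$ with $\tilde W_0=X_{n+1}\times X_{n+1}$ and $\tilde W_{k+1}^3\subseteq\tilde W_k$, intersecting with $W_k$ at each step so that $\tilde W_k\cap X_n^2\subseteq\{d<4^{-k}\}$ while the cube condition survives (an intersection of entourages still cubes into the previous one). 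Feeding the chain $(\tilde W_k)_{k\in\w}$ into the Alexandroff--Urysohn/Weil metrization lemma produces a uniform pseudometric $\bar d'$ on $X_{n+1}$ whose small balls are trapped in the $\tilde W_k$; restricting the resulting inclusions $\{\bar d'<2^{-k}\}\cap X_n^2\subseteq\{d<4^{-k}\}$ to $X_n^2$ yields a comparison $\bar d'|X_n^2\ge c\,d$ for a positive constant $c$, and $\bar d=c^{-1}\bar d'$ is the desired pseudometric. The only delicate points are the bookkeeping of the metrization constants and the verification that the recursive choice of $\tilde W_k$ can simultaneously respect the cube condition and the trace condition; both are routine once the scheme is set up.
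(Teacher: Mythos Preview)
The paper does not prove this proposition; it merely quotes it from \cite{BaR}. So there is no in-paper proof to compare against, and I can only assess your argument on its own merits.

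Your proof is correct and follows the natural route: reduce to a one-step \emph{domination lemma} (every bounded uniform pseudometric on a uniform subspace is dominated by a uniform pseudometric on the ambient space), prove that lemma by feeding a chain of entourages $\tilde W_k$ with $\tilde W_{k+1}^3\subset\tilde W_k$ and $\tilde W_k\cap X_n^2\subset\{d<4^{-k}\}$ into the Alexandroff--Urysohn metrization scheme, and then build the monotone sequence inductively via $d_{n+1}=\max\{\sigma_{n+1},\bar d_n\}$. Two small points worth making explicit. First, the constant bookkeeping you flag is easy: from $\{\bar d'<2^{-k}\}\cap X_n^2\subset\{d<4^{-k}\}$ and $d\le 1$ one gets $\bar d'|X_n^2\ge\tfrac12\,d$ (check the two ranges $d\ge\tfrac14$ and $d<\tfrac14$ separately), so $\bar d=2\bar d'$ suffices. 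Second, for the induction you need each $d_n$ to remain bounded so that the domination lemma applies at the next step; this is automatic because the metrization lemma with $\tilde W_0=X_{n+1}\times X_{n+1}$ produces a bounded $\bar d'$, and alternatively you can truncate $d_{n+1}$ at $1$ without disturbing monotonicity (since $d_n\le 1$) or the inclusion $\{d_{n+1}<1\}\subset U_{n+1}$.
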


For a point $x\in X=\bigcup_{n\in\w}X_n$ let $|x|=\min\{n\in\w\rozd x\in X_n\}$ be the {\em height} of $x$ in $X$. For two points $x,y\in X$ put $|x,y|=\max\{|x|,|y|\}$. Now we define a limit operator $\dlim$ assigning to each sequence of pseudometrics $(d_n)_{n\in\w}\in\prod_{n\in\w}\PM_{X_n}$ the pseudometric $d_\infty=\dlim d_n$ on $X$ defined by the formula
$$d_\infty(x,y)=\inf\Big\{\sum_{i=1}^md_{|x_{i-1},x_i|}(x_{i-1},x_i)\rozd x=x_0,x_1,\dots,x_n=y\Big\}.$$
In fact, the pseudometric $\dlim d_n$ is well-defined for any functions $d_n:X_n\times X_n\to [0,\infty)$, $n\in\w$, such that $d_n(x,x)=0$ and $d_n(x,y)=d_n(y,x)$ for all $x,y\in X_n$.

The following theorem proved in \cite{BaR} describes the uniformity of uniform direct limits.

\begin{theorem}\label{t7.2} For a tower of uniform spaces $(X_n)_{n\in\w}$ and an adequate family $\A\subset\mprod\limits_{n\in\w}\PM_{X_n}$ the uniformity of the uniform direct limit $\ulim X_n$ is generated by the family of pseudometrics $\{\dlim d_n\rozd (d_n)_{n\in\w}\in\A\}.$
\end{theorem}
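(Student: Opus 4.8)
The plan is to prove Theorem~\ref{t7.2} by showing that the two families of entourages generate the same uniformity on $X=\bigcup_{n\in\w}X_n$: on one side, the uniformity $\U$ of the uniform direct limit $\ulim X_n$, characterized as the strongest uniformity making all inclusions $X_n\to X$ uniformly continuous; on the other, the uniformity $\U'$ generated by the pseudometrics $\{\dlim d_n\rozd (d_n)_{n\in\w}\in\A\}$. I would establish the two inclusions $\U'\subseteq\U$ and $\U\subseteq\U'$ separately.

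\emph{The inclusion $\U'\subseteq\U$ (every $\dlim d_n$ is a uniform pseudometric on $\ulim X_n$).} For a fixed monotone sequence $(d_n)_{n\in\w}\in\A$, I would check that $d_\infty=\dlim d_n$ is a uniform pseudometric with respect to $\U$. By the definition of $\ulim X_n$ as the strongest compatible uniformity, it suffices to verify that the restriction $d_\infty|X_k^2$ is a uniform pseudometric on each $X_k$, since $\U$ is exactly the uniformity making all inclusions uniformly continuous. The key computation is that $d_\infty|X_k^2\le d_k$: any chain joining two points of $X_k$ can be estimated using the term-by-term definition of $d_\infty$, and monotonicity $d_n\le d_{n+1}|X_n^2$ guarantees that replacing a chain by the direct edge does not increase the sum. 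Because $d_k$ is a uniform pseudometric on $X_k$, the smaller pseudometric $d_\infty|X_k^2$ is uniform as well, so $d_\infty\in\PM_{\ulim X_n}$ and hence $\{d_\infty<1\}\in\U$.

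\emph{The inclusion $\U\subseteq\U'$ (the pseudometrics $\dlim d_n$ generate a uniformity at least as strong as $\U$).} Here I would start from an arbitrary entourage of the strongest uniformity $\U$. Any such uniformity is generated by countably indexed data: a basic entourage is determined by a choice of entourages $(U_n)_{n\in\w}\in\prod_{n\in\w}\U_{X_n}$, one for each level, glued together appropriately. Using adequacy of $\A$, I would produce a monotone sequence $(d_n)_{n\in\w}\in\A$ with $\{d_n<1\}\subset U_n$ for all $n$. Then I would argue that the single pseudometric $d_\infty=\dlim d_n$ controls the given basic entourage, i.e. that a set of the form $\{d_\infty<\e\}$ (for a suitable small $\e$, perhaps depending on the level bookkeeping) is contained in the prescribed basic entourage of $\U$. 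This forces $\U\subseteq\U'$.

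\emph{The main obstacle.} The delicate point is the second inclusion, specifically pinning down what a ``basic entourage'' of the strongest uniformity $\U$ actually looks like and showing that a single limit pseudometric suffices to dominate it. The strongest uniformity turning all inclusions $X_n\to X$ uniformly continuous is not simply generated by the level entourages $U_n$ pulled back naively; one must understand how entourages combine across levels, and the nontrivial content is that the one-step chain estimates defining $d_\infty$ are precisely tight enough to recover membership in the target entourage while loose enough to remain controllable by the uniform pseudometrics at each finite level. Since this theorem is cited as already proved in \cite{BaR}, I would lean on the adequacy proposition and the explicit chain formula for $\dlim d_n$, and the crux is verifying that the triangle-type estimate for $d_\infty$ interacts correctly with monotonicity so that the two uniformities coincide rather than merely being comparable.
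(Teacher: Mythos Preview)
The paper does not prove Theorem~\ref{t7.2}; it only states the result and cites \cite{BaR} for the proof. So there is no in-paper argument to compare your proposal against.

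Evaluating your outline on its own merits: the first inclusion $\U'\subseteq\U$ is essentially complete. The bound $d_\infty|X_k^2\le d_k$ follows by taking the trivial two-point chain and using monotonicity to pass from $d_{|x,y|}$ up to $d_k$; then maximality of $\U$ (as you say) forces $d_\infty$ to be $\U$-uniform.

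The second inclusion, however, has a genuine gap. You correctly name the obstacle---one must know what a basic entourage of $\U$ looks like---but you do not resolve it. Writing ``a basic entourage is determined by a choice of entourages $(U_n)_{n\in\w}$, glued together appropriately'' and then ``I would argue that $\{d_\infty<\e\}$ is contained in the prescribed basic entourage'' is exactly the step that carries all the content, and it is missing. Concretely: adequacy gives you only the entourage containment $\{d_n<1\}\subset U_n$, not a pointwise inequality $d_n\ge\rho_n$ for some target pseudometric $\rho$; so a chain realizing $d_\infty(x,y)<1$ may have many short steps, each landing in some $U_{n_i}$, and you must show these compositions stay inside the given $V\in\U$. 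One workable route is to bypass the description of $\U$ and instead verify that $\U'$ itself satisfies the universal property: given any uniformity $\V$ making all inclusions uniformly continuous and $V\in\V$, choose $V=W_0\supset W_1\supset\cdots$ in $\V$ with $W_{n+1}\circ W_{n+1}\circ W_{n+1}\subset W_n$, set $U_n=W_n\cap X_n^2$, use adequacy to get $(d_n)\in\A$ with $\{d_n<1\}\subset U_n$, and then prove $\{d_\infty<1\}\subset V$ by a dyadic chain-regrouping argument of the same flavor as the classical metrization lemma \cite[8.1.10]{En}. That last combinatorial step is where the real work lies, and your proposal stops just before it.
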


Theorem~\ref{t7.2} implies a simple description of the topology of the uniform limit $\ulim X_n$ also given in \cite{BaR}. Given two subsets $U,V\subset X^2$ of the square of $X$, consider their composition (as relations):
$$A\circ B=\{(x,z)\in X^2\rozd \mbox{ there is $y\in X$ such that $(x,y)\in A$ and $(y,z)\in B$ }\}.$$
This operation can be extended to finite and infinite sequences of subsets $(A_n)_{n\in\w}$ of $X^2$ by the formula 
$$\sum_{n\ge k}A_n=\bigcup_{n\ge k}A_k\circ A_{k+1}\cdots \circ A_n.$$

For a point $x$ of a set $X$ and a subset $U\subset X^2$ let $B(x,U)=\{y\in X\rozd (x,y)\in U\}$ be the $U$-ball centered at $x$. We recall that for a point $x$ of the union $X=\bigcup_{n\in\w}X_n$ of a tower $(X_n)$ by $|x|=\min\{n\in\w\rozd x\in X_n\}$ we denote the height of $x$ in $X$.

\begin{theorem}\label{t7.3} The topology of the uniform direct limit $\ulim X_n$ of a tower of uniform spaces $(X_n)$ is generated by the base
$$\mathcal B=\Big\{B\big(x; \sum_{n\ge|x|}U_n\big)\rozd x\in X,\;\;(U_n)_{n\ge|x|}\in\prod\limits_{n\ge|x|}\U_{X_n}\Big\}.$$
\end{theorem}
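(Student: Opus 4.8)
The plan is to deduce the statement from the pseudometric description in Theorem~\ref{t7.2}. Since the uniformity of $\ulim X_n$ is generated by the pseudometrics $\dlim d_n$ with $(d_n)_{n\in\w}$ ranging over an adequate family (and one may take $\A=\mprod_{n\in\w}\PM_{X_n}$), a neighborhood base at a point $x\in X$ in the topology of $\ulim X_n$ is formed by the balls $O_x(d)=\{y\in X : \dlim d_n(x,y)<1\}$ (using that $\A$ is closed under finite maxima, so single pseudometrics suffice). Hence it suffices to show that at every $x$ the family of balls $O_x(d)$ and the family of sets $B(x;\sum_{n\ge|x|}U_n)$ are mutually cofinal neighborhood bases, and that each set $B(x;\sum_{n\ge|x|}U_n)$ is open. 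I would organize the work into three steps: (i) each ball $O_x(d)$ contains some $B(x;\sum_{n\ge|x|}U_n)$; (ii) each $B(x;\sum_{n\ge|x|}U_n)$ contains some ball $O_x(d)$; and (iii) a ``shift lemma'' upgrading (ii) to openness of the sets $B(x;\sum_{n\ge|x|}U_n)$.

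Step (i) is the easy half. Given $(d_n)\in\A$, put $U_n=\{d_n<\e_n\}\in\U_{X_n}$ with $\sum_{n\ge k}\e_n<1$, where $k=|x|$. If $y\in B(x;\sum_{n\ge k}U_n)$, then $(x,y)\in U_k\circ\cdots\circ U_m$ is witnessed by a path $x=z_{k-1},z_k,\dots,z_m=y$ with $(z_{n-1},z_n)\in U_n\subseteq X_n^2$. Reading this path as a chain for $\dlim d_n$ and using that both endpoints of the $n$-th link lie in $X_n$, so that $|z_{n-1},z_n|\le n$, monotonicity of $(d_n)$ gives $d_{|z_{n-1},z_n|}(z_{n-1},z_n)\le d_n(z_{n-1},z_n)<\e_n$; summing yields $\dlim d_n(x,y)<1$. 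Thus $B(x;\sum_{n\ge k}U_n)\subseteq O_x(d)$, and $x$ lies in this set because the diagonal is contained in each $U_n$.

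Step (ii) is the main obstacle. Here $(U_n)_{n\ge k}$ is given and I must manufacture a ball $O_x(d)$ inside $B(x;\sum_{n\ge k}U_n)$. The key is a normalization of the chains computing $\dlim d_n$: starting from any chain $x=x_0,\dots,x_p=y$ I would repeatedly (a) delete a point $x_i$ that is a strict local maximum of the height sequence $(|x_i|)$, replacing the two incident links by the single link $x_{i-1}\to x_{i+1}$, and (b) delete a point lying between two links of equal height $|x_{i-1},x_i|=|x_i,x_{i+1}|$. Using the triangle inequality in each $d_h$ together with monotonicity $d_h\le d_{h'}|X_h^2$ for $h\le h'$, neither move increases the total length. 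Each move removes a point, so the process stops at a chain whose point-heights are unimodal, i.e. first non-increasing and then non-decreasing; in particular all heights are $\le\max(|x|,|y|)$ and the step-heights are likewise unimodal. For such a valley-shaped chain one can assign to its links strictly increasing indices $n_i$ with $n_i\ge|x_{i-1},x_i|$ and, crucially, with bounded shift $n_i-|x_{i-1},x_i|\le 2|x|+1$ (on the descending part the heights drop from $|x|$, forcing a small gap, and on the ascending part the indices catch up with the heights). Consequently, if I set $W_h=\bigcap\{U_n\cap X_h^2 : \max(k,h)\le n\le h+2k+1\}$ — a finite intersection, hence a member of $\U_{X_h}$ — and use the adequacy of $\A$ to pick a monotone $(d_n)$ with $\{d_n<1\}\subseteq W_n$, then every link of the normalized chain of a point $y$ with $\dlim d_n(x,y)<1$ lies in the corresponding $U_{n_i}$; padding the unused indices with the diagonal exhibits $(x,y)\in U_k\circ\cdots\circ U_m$, so $O_x(d)\subseteq B(x;\sum_{n\ge k}U_n)$. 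The delicate points I expect to fight with are the proof that the two deletion rules terminate at a genuinely valley-shaped profile and the explicit bound on the index shift.

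Finally, for step (iii) I would prove the shift lemma: if $(x,y)\in U_k\circ\cdots\circ U_m$ then, setting $V_n=U_{(m+1-|y|)+n}\cap X_n^2$ for $n\ge|y|$, one has $B(y;\sum_{n\ge|y|}V_n)\subseteq B(x;\sum_{n\ge k}U_n)$; this is a direct concatenation of the given path from $x$ to $y$ with any path from $y$ witnessing membership in $\sum_{n\ge|y|}V_n$, re-indexed by the constant shift $m+1-|y|$ (which is positive since $|y|\le m$). Combining the shift lemma with step (ii) applied at $y$ shows that $B(x;\sum_{n\ge|x|}U_n)$ is a neighborhood of each of its points, i.e. is open. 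Together with step (i) this yields that $\mathcal B$ is a base for the topology of $\ulim X_n$.
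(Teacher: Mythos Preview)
The paper does not prove Theorem~\ref{t7.3}: it is stated immediately after the remark that Theorem~\ref{t7.2} ``implies a simple description of the topology of the uniform limit $\ulim X_n$ also given in \cite{BaR}'', and no argument is supplied. So there is no in-paper proof to compare against; the only information the paper provides is that the result should be extracted from the pseudometric description of Theorem~\ref{t7.2}, which is precisely the route you take.

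Your scheme is correct. Step~(i) is routine and Step~(iii) is a clean re-indexing. The real content is Step~(ii), and your chain normalization works: after deleting strict local maxima of the point-height sequence (neither move touches the endpoints) and then collapsing adjacent links of equal height, the link heights form a strictly unimodal sequence $h_1>\cdots>h_r<h_{r+1}<\cdots<h_p$ with $h_1\ge k=|x|$; when the descending part is nonempty one has $h_1=k$ and hence $r\le k+1$. The assignment $n_i=k+i-1$ for $i\le r$ and $n_i=h_i+(k+r-1-h_r)$ for $i>r$ (or simply $n_i=h_i$ in the purely ascending case $h_1>k$) then gives strictly increasing indices with $\max(k,h_i)\le n_i\le h_i+2k+1$, so the finite intersection defining $W_h$ is legitimate and each normalized link lands in the required $U_{n_i}$. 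The points you flag as delicate --- termination and preservation of unimodality under the two deletion rules, and the explicit shift bound --- do go through with this bookkeeping.
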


\section{Uniformities on groups}\label{s:ug}

In this section we discuss some natural uniformities on topological groups. For more information on this subject, see \cite{RD} and \cite[\S1.8]{AT}.

Let us recall that each topological group $G$ carries four natural uniformities:
\begin{itemize}
\item[1)] the {\em left uniformity\/} $\U^{\Ls}$, generated by the entourages $U^{\Ls}=\{(x,y)\in G\rozd x\in yU\}$ where $U\in\BB_e$;
\item[2)] the {\em right uniformity\/} $\U^{\Rs}$, generated by the entourages $U^{\Rs}=\{(x,y)\in G\rozd x\in Uy\}$ where $U\in\BB_e$;
\item[3)] the {\em two-sided uniformity\/} $\U^{\LRs}$, generated by the entourages  $U^{\LRs}=\{(x,y)\in G{\rozd}\allowbreak x\in yU\cap Uy\}$ with $U\in\BB_e$;
\item[4)] the {\em Roelcke uniformity\/} $\U^{\RLs}$, generated by the entourages $U^{\RLs}=\{(x,y)\in G\rozd\allowbreak x\in UyU\}$ with $U\in\BB_e$.
\end{itemize}
Here $\BB_e$ stands for the family of open symmetric neighborhoods $U=U^{-1}\subset G$ of the neutral element $e$ in the topological group $G$.

The group $G$ endowed with the uniformity $\U^{\Ls}$, $\U^{\Rs}$, $\U^{\LRs}$ or $\U^{\RLs}$ will be denoted by $G^{\Ls}$, $G^{\Rs}$, $G^{\LRs}$ or $G^{\RLs}$, respectively. It follows from the definition of those uniformities that the identity maps in the following diagram are uniformly continuous:

\begin{picture}(200,90)(-130,-10)
\put(5,30){$G^{\LRs}$}
\put(30,28){\vector(3,-2){30}}
\put(30,38){\vector(3,2){30}}
\put(65,0){$G^{\Ls}$}
\put(85,6){\vector(3,2){30}}
\put(65,60){$G^{\Rs}$}
\put(85,58){\vector(3,-2){30}}
\put(120,30){$G^{\RLs}$}
\end{picture}

Any isomorphic topological embedding $H\hookrightarrow G$ of topological groups induces uniform embeddings 
$$H^{\Ls}\hookrightarrow G^{\Ls},\;\;H^{\Rs}\hookrightarrow G^{\Rs}, \;\; H^{\LRs}\hookrightarrow G^{\LRs}$$ of the corresponding uniform spaces, see Proposition~1.8.4 of \cite{AT}. For the Roelcke uniformity the induced map 
$H^{\RLs}\to G^{\RLs}$ is merely uniformly continuous, but is not necessarily a uniform embedding, see \cite{Us}.

Let us observe that $G^{\LRs}$, $G^{\Ls}$, $G^{\Rs}$, $G^{\RLs}$ are groups endowed with uniformities which are tightly connected with their algebraic structure.

By analogy with semitopological and quasitopological groups, let us define a group $G$ endowed with a uniformity to be a
\begin{itemize}
\item {\em semiuniform group} if left and right shifts on $G$ are uniformly continuous;
\item {\em quasiuniform group} if $G$ is a semiuniform group with uniformly continuous inversion;
\item {\em uniform group} if $G$ is a quasiuniform group with uniformly continuous multiplication $G\times G\to G$, $(x,y)\mapsto xy$.
\end{itemize}

The groups $G^{\Ls}$, $G^{\Rs}$, $G^{\LRs}$, $G^{\RLs}$ are basic examples of groups endowed with a uniformity.
Some elementary properties of those groups are presented in the following two propositions whose proof is left to the interested reader (cf. Corollary 1.8.16 \cite{AT}).

\begin{proposition} For any topological group $G$
\begin{enumerate}
\item $G^{\Ls}$ and $G^{\Rs}$ are semiuniform topological groups\textup{;}
\item $G^{\LRs}$ and $G^{\RLs}$ are quasiuniform topological groups.
\end{enumerate}
\end{proposition}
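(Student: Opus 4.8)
The plan is to verify each of the four uniformly-structured properties by a direct calculation from the definitions of the four entourage systems given in Section~\ref{s:ug}. Since the statement asserts that $G^{\Ls}$ and $G^{\Rs}$ are semiuniform (left and right shifts uniformly continuous) while $G^{\LRs}$ and $G^{\RLs}$ are quasiuniform (additionally, uniformly continuous inversion), I would organize the proof as two blocks: first the shift continuity, then, for the two-sided and Roelcke cases, the inversion.

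First I would check uniform continuity of shifts. For the left uniformity, a left shift $L_a\colon x\mapsto ax$ sends a pair $(x,y)$ with $x\in yU$ to $(ax,ay)$, and $ax\in ayU=(ay)U$, so $L_a$ carries the entourage $U^{\Ls}$ into itself; the right shift $R_a\colon x\mapsto xa$ sends $x\in yU$ to $xa\in yUa=ya(a^{-1}Ua)$, so I would need to absorb the conjugate $a^{-1}Ua$, which is itself a neighborhood of $e$ (conjugation is a topological automorphism of $G_n$, hence carries $\BB_e$-sets to neighborhoods), giving $R_a(U^{\Ls})\subset V^{\Ls}$ for a suitable basic $V$. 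The right uniformity is handled symmetrically by interchanging the roles of left and right. For $G^{\LRs}$ and $G^{\RLs}$ the shift computations combine these two observations: in the two-sided case one intersects the left and right conditions, and in the Roelcke case the symmetric ``$x\in UyU$'' condition is manifestly preserved by either shift after one conjugation, so semiuniformity follows in all four cases.

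Next I would verify that inversion $x\mapsto x^{-1}$ is uniformly continuous for $\U^{\LRs}$ and $\U^{\RLs}$, which is where the asymmetry between the four uniformities becomes essential. For the two-sided entourage, if $x\in yU\cap Uy$ then taking inverses gives $x^{-1}\in U^{-1}y^{-1}\cap y^{-1}U^{-1}=Uy^{-1}\cap y^{-1}U$ (using $U=U^{-1}$), which is exactly the condition $(y^{-1},x^{-1})\in U^{\LRs}$; thus inversion maps $U^{\LRs}$ into itself, so it is uniformly continuous. For the Roelcke entourage, $x\in UyU$ inverts to $x^{-1}\in U^{-1}y^{-1}U^{-1}=Uy^{-1}U$, again the Roelcke condition on the inverted pair, so inversion preserves $U^{\RLs}$. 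By contrast, inverting $x\in yU$ yields $x^{-1}\in Uy^{-1}$, a \emph{right}-uniformity condition, which is why $G^{\Ls}$ and $G^{\Rs}$ are only semiuniform and are not claimed to be quasiuniform.

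The main obstacle here is essentially bookkeeping rather than a genuine difficulty: the only nontrivial point is the appearance of the conjugate $a^{-1}Ua$ in the right-shift computation for the left uniformity (and its mirror image), and one must check that this is still a neighborhood of $e$, which it is because inner automorphisms are homeomorphisms of each $G_n$ fixing $e$. Everything else is a direct symmetry argument from the symmetry $U=U^{-1}$ of the chosen basic neighborhoods. Since the proposition is flagged as elementary with its proof left to the reader, I would present these verifications compactly, emphasizing the inversion identities that distinguish the quasiuniform cases from the merely semiuniform ones.
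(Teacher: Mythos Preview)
Your proposal is correct and is precisely the elementary entourage-by-entourage verification that the paper anticipates (the paper omits the proof entirely, leaving it to the reader with a reference to \cite[Cor.~1.8.16]{AT}). Two cosmetic points: in the final paragraph you write ``homeomorphisms of each $G_n$'' where you mean ``of $G$'' (this proposition concerns a single group, not the tower), and in the right-shift computation the inclusion should be phrased as finding $V$ with $(R_a\times R_a)(V^{\Ls})\subset U^{\Ls}$ rather than $R_a(U^{\Ls})\subset V^{\Ls}$, but your identification of the conjugate $a^{-1}Ua$ as the only nontrivial ingredient is exactly right.
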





\begin{proposition} For a topological group $G$ the following conditions are equivalent:
\begin{enumerate}
\item $G^{\Ls}$ is a quasiuniform group\textup{;}
\item $G^{\Ls}$ is a uniform group\textup{;}
\item $G^{\Rs}$ is a quasiuniform group\textup{;}
\item $G^{\Rs}$ is a uniform group\textup{;}
\item $G^{\LRs}$ is a uniform group\textup{;}
\item $G^{\RLs}$ is a uniform group\textup{;}
\item the left and right uniformities on $G$ coincide\textup{;}
\item the group $G$ is balanced.
\end{enumerate}
\end{proposition}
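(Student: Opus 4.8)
The plan is to reduce all eight conditions to the balanced property~(8). The previous proposition already provides that $G^{\Ls}$ and $G^{\Rs}$ are semiuniform and that $G^{\LRs}$ and $G^{\RLs}$ are quasiuniform, so for the left and right uniformities only the inversion has to be examined, and for all four only the multiplication. First I would record the left--right duality: since every basic neighborhood satisfies $U=U^{-1}$, the inversion $x\mapsto x^{-1}$ is both a group anti-isomorphism and a uniform isomorphism $G^{\Ls}\to G^{\Rs}$, and it therefore transports the quasiuniform and uniform group structures in both directions. This gives $(1)\Leftrightarrow(3)$ and $(2)\Leftrightarrow(4)$ at once, while $(2)\Rightarrow(1)$ and $(4)\Rightarrow(3)$ hold trivially because a uniform group is quasiuniform.

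The computational heart is the observation that uniform continuity of the inversion on $G^{\Ls}$ is \emph{literally} the balanced property. Rewriting $(x,y)\in V^{\Ls}$ as $y^{-1}x\in V$ and $(x^{-1},y^{-1})\in U^{\Ls}$ as $yx^{-1}\in U$ and then substituting $t=y^{-1}x$, one sees that the inversion is uniformly continuous on $G^{\Ls}$ exactly when for every symmetric neighborhood $U$ of $e$ there is a symmetric neighborhood $V$ with $yVy^{-1}\subset U$ for all $y\in G$, that is, with $V\subset\sqrt[G]{U}$; this is precisely the requirement that each $\sqrt[G]{U}$ be a neighborhood of $e$. Hence $(1)\Leftrightarrow(8)$. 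The same one-line substitution $s=xy^{-1}$ applied to a putative inclusion $V^{\Rs}\subset U^{\Ls}$ shows that every left entourage contains a right entourage if and only if $G$ is balanced, and symmetrically for the reverse inclusion, so that $\U^{\Ls}=\U^{\Rs}$ is equivalent to balancedness, giving $(7)\Leftrightarrow(8)$.

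Next I would prove $(8)\Rightarrow(2)$, which simultaneously upgrades the quasiuniform conclusions to uniform ones. Given $W$, choose a symmetric $W_0$ with $W_0W_0\subset W$ and, by balancedness, a symmetric neighborhood $V\subset W_0\cap\sqrt[G]{W_0}$. For $(x,x'),(y,y')\in V^{\Ls}$ put $a=x'^{-1}x\in V$ and $b=y'^{-1}y\in V$; then $(x'y')^{-1}(xy)=(y'^{-1}ay')\,b\in W_0W_0\subset W$, because $y'^{-1}ay'$ is a conjugate of $a\in\sqrt[G]{W_0}$. Thus the multiplication is uniformly continuous on $G^{\Ls}$. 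The identical estimate shows that, under balancedness, every left entourage contains a Roelcke entourage $V^{\RLs}$, whence $\U^{\RLs}=\U^{\Ls}$; combined with $\U^{\LRs}=\U^{\Ls}$ (the two-sided entourages are the intersections $U^{\Ls}\cap U^{\Rs}$, which collapse once the two uniformities coincide) this makes all four uniformities equal, so $G^{\LRs}=G^{\RLs}=G^{\Ls}$ is a uniform group and $(8)\Rightarrow(5),(6)$.

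It remains to run the converse implications $(5)\Rightarrow(8)$ and $(6)\Rightarrow(8)$ by feeding well-chosen test pairs into the uniform continuity of the multiplication. For $(5)$ I would use the two-sided pair $x=u\in U$, $x'=e$ and $y=y'=g$; the conclusion $(ug,g)\in W^{\LRs}$ contains the inclusion $g^{-1}ug\in W$ valid for all $g\in G$ and $u\in U$, i.e. $U\subset\sqrt[G]{W}$. The hard part is $(6)$, because the Roelcke entourages around a point are neither left- nor right-invariant and the correct test pair is not obvious; the one that works is $x=g$, $x'=g$ and $y=ug^{-1}$, $y'=g^{-1}$ (both admissible, since $g\in UgU$ and $ug^{-1}\in Ug^{-1}U$). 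These give $xy=gug^{-1}$ and $x'y'=e$, so the conclusion $(gug^{-1},e)\in W^{\RLs}$ reads $gug^{-1}\in W^2$; starting instead from a symmetric $W_0$ with $W_0^2\subset W$ yields $U\subset\sqrt[G]{W}$ once more. With both $(5)\Rightarrow(8)$ and $(6)\Rightarrow(8)$ in hand the whole list becomes equivalent. I expect this Roelcke step to be the only real obstacle; everything else reduces to the inversion/balancedness identity of the second paragraph.
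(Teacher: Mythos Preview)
The paper does not actually supply a proof of this proposition: it is one of the two ``elementary'' propositions in Section~\ref{s:ug} whose proof is explicitly left to the interested reader, with a pointer to Corollary~1.8.16 of \cite{AT}. So there is nothing to compare your argument against on the paper's side.

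Your proposal is correct and is the natural argument one would expect. The reductions $(1)\Leftrightarrow(3)$, $(2)\Leftrightarrow(4)$ via the anti-isomorphism $x\mapsto x^{-1}$, the identification of $(1)$ with $(8)$ by unwinding ``inversion is $\U^{\Ls}$-uniformly continuous,'' and the equivalence $(7)\Leftrightarrow(8)$ are all standard and done cleanly. Your computation for $(8)\Rightarrow(2)$ is exactly the usual one-line proof that in a balanced group the multiplication is $\U^{\Ls}$-uniformly continuous, and the observation that all four uniformities then collapse gives $(8)\Rightarrow(5),(6)$ immediately. The test pairs you chose for $(5)\Rightarrow(8)$ and $(6)\Rightarrow(8)$ both work; in the Roelcke case your pair $x=x'=g$, $y=ug^{-1}$, $y'=g^{-1}$ lands $(gug^{-1},e)$ in $W_0^{\RLs}$, hence $gug^{-1}\in W_0^2$, and the preliminary refinement $W_0^2\subset W$ finishes it. Nothing is missing.
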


\section{The uniform structure of the semitopological groups $\LA[G]$, $\RA[G]$, and $\LR[G]$}\label{s:ulim-top}

Each tower of topological groups $(G_n)_{n\in\w}$ induces four ascending sequences of uniform spaces
$(G^{\Ls}_n)_{n\in\w}$, $(G_n^{\Rs})_{n\in\w}$, $(G_n^{\LRs})_{n\in\w}$, $(G_n^{\RLs})_{n\in\w}$. The direct limits of these sequences in the category of uniform spaces are denoted by
$$\ulim G_n^{\Ls},\quad\ulim G_n^{\Rs},\quad\ulim G_n^{\LRs},\mbox{ \ and \ }\ulim G_n^{\RLs},$$respectively.

These uniform spaces endowed with the group operation inherited from $G=\bigcup_{n\in\w}G_n$ are semiuniform groups. The uniform continuity of the left and right shifts follows from the uniform continuity of the left and right shifts on the semiuniform groups $G_n^{\Ls}$, $G_n^{\Rs}$, $G_n^{\LRs}$, $G_n^{\RLs}$, $n\in\w$. Moreover, the semiuniform groups $\ulim G_n^{\LRs}$ and $\ulim G_n^{\RLs}$ are quasiuniform because so are the groups $G_n^{\LRs}$ and $G_n^{\RLs}$, $n\in\w$.

The uniform continuity of the identity maps

\begin{picture}(200,80)(-100,-2)
\put(5,30){$G_n^{\LRs}$}
\put(30,28){\vector(3,-2){30}}
\put(30,38){\vector(3,2){30}}
\put(65,0){$G_n^{\Ls}$}
\put(85,6){\vector(3,2){30}}
\put(65,60){$G_n^{\Rs}$}
\put(85,58){\vector(3,-2){30}}
\put(120,30){$G_n^{\RLs}$}
\put(145,32){\vector(1,0){30}}
\put(185,30){$(\glim G_n)^{\RLs}$}
\end{picture}

for all $n\in\w$ implies the uniform continuity of the identity maps:

\begin{picture}(200,90)(-100,-10)
\put(-10,30){$\ulim G_n^{\LRs}$}
\put(25,23){\vector(2,-1){25}}
\put(25,42){\vector(2,1){25}}
\put(50,0){$\ulim G_n^{\Ls}$}
\put(80,12){\vector(2,1){25}}
\put(50,60){$\ulim G_n^{\Rs}$}
\put(80,53){\vector(2,-1){25}}
\put(100,30){$\ulim G_n^{\RLs}$}
\put(150,32){\vector(1,0){30}}
\put(190,30){$(\glim G_n)^{\RLs}$}
\end{picture}

Theorems~\ref{t7.2} and \ref{t7.3} imply the following description of the uniform and topological structure of the uniform limit $\ulim G_n^{\Ls}$. 

\begin{theorem}For a tower of topological groups $(G_n)_{n\in\w}$
\begin{enumerate}
\item the topology of the semiuniform group $\ulim G_n^{\Ls}$ coincides with the topology $\LA[\tau]$ on the group  $G=\bigcup_{n\in\w}G_n$\textup{;}
\item the uniformity of $\ulim G_n^{\Ls}$ is generated by the family of pseudometrics 
$\{\dlim d_n\rozd (d_n)_{n\in\w}\in\A\}$ for any adequate family $\A\subset\mprod\limits_{n\in\w}\PM_{G_n}$.
\end{enumerate}
\end{theorem}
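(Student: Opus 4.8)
The plan is to deduce (2) directly from Theorem~\ref{t7.2} and to prove (1) by matching neighborhood bases at the neutral element $e$. For (2), I would first note that each isomorphic topological embedding $G_n\hookrightarrow G_{n+1}$ induces a uniform embedding $G_n^{\Ls}\hookrightarrow G_{n+1}^{\Ls}$ (Proposition~1.8.4 of \cite{AT}), so $(G_n^{\Ls})_{n\in\w}$ is a tower of uniform spaces in the sense of Section~\ref{s:ulim}. Applying Theorem~\ref{t7.2} to this tower and to any adequate family $\A\subset\mprod_{n\in\w}\PM_{G_n}$ of monotone sequences of left-uniform pseudometrics gives assertion (2) at once. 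Thus all the content lies in assertion (1).

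To prove (1), I would first observe that both topologies are left-invariant: $\LA[G]$ is a semitopological group and $\ulim G_n^{\Ls}$ is a semiuniform group, so in each case every left shift $g\mapsto xg$ is a homeomorphism. Hence it suffices to show that $\LA[\tau]$ and the topology of $\ulim G_n^{\Ls}$ have one and the same neighborhood base at $e$; left-invariance then transports the coincidence to every point of $G$.

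The heart of the matter is a computation turning composition of entourages into directed products. For a symmetric $U\in\BB_n$ the left entourage satisfies $B(x,U^{\Ls})=xU$, and for a sequence $(W_n)_{n\in\w}\in\prod_{n\in\w}\BB_n$ an induction on $m$ shows $B\big(e;(W_0)^{\Ls}\circ\cdots\circ(W_m)^{\Ls}\big)=\LP_{0\le n\le m}W_n$: any chain $e=y_0,y_1,\dots,y_{m+1}=z$ realizing the composition satisfies $y_{j+1}\in y_jW_j$, whence $z\in W_0W_1\cdots W_m$, and conversely every element of $W_0\cdots W_m$ arises from such a chain. Taking the union over $m$ yields $B\big(e;\sum_{n\ge0}(W_n)^{\Ls}\big)=\LP_{n\in\w}W_n$. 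Now Theorem~\ref{t7.3}, applied at the point $e$ of height $0$, says that the sets $B\big(e;\sum_{n\ge0}U_n\big)$, $(U_n)\in\prod_{n\in\w}\U_{G_n^{\Ls}}$, form a neighborhood base at $e$ for $\ulim G_n^{\Ls}$; since the basic entourages $\{U^{\Ls}:U\in\BB_n\}$ are cofinal in $\U_{G_n^{\Ls}}$, the subfamily $\{\LP_{n\in\w}W_n:(W_n)\in\prod_{n\in\w}\BB_n\}=\LA[\BB]_e$ is again such a base. As $\LA[\BB]_e$ is by definition a neighborhood base at $e$ for $\LA[\tau]$, the two topologies agree at $e$ and hence everywhere. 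The only non-formal step is the entourage identity $B(e;\sum_{n\ge0}(W_n)^{\Ls})=\LP_{n\in\w}W_n$, which converts the relational composition underlying the uniform direct limit into the multiplicative products defining $\LA[\tau]$; the remainder is routine manipulation of left shifts together with Theorems~\ref{t7.2} and~\ref{t7.3}.
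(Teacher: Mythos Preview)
Your proposal is correct and follows exactly the route the paper intends: the paper simply states that the theorem is implied by Theorems~\ref{t7.2} and~\ref{t7.3}, and your argument unpacks precisely this implication, supplying the one computation (that $B\big(e;\sum_{n\ge0}(W_n)^{\Ls}\big)=\LP_{n\in\w}W_n$) and the left-invariance reduction that the paper leaves tacit.
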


This theorem allows us to identify the semitopological group $\LA[G]$ with the semiuniform group $\ulim G_n^{\Ls}$.
In the same way we shall identify the semitopological group $\RA[G]$ with the semiuniform group $\ulim G_n^{\Rs}$.

The semitopological group $\LR[G]$ is a quasiuniform group with respect to the uniformity inherited from the product $\RA[G]\times\LA[G]$ by the diagonal embedding $$\LR[G]\hookrightarrow \RA[G]\times\LA[G],\quad x\mapsto (x,x).$$

The uniform continuity of the identity maps 
$$\ulim G_n^{\LRs}\to\ulim G_n^{\Ls}=\LA[G]\mbox{ \  and \ }
\ulim G_n^{\LRs}\to\ulim G_n^{\Rs}=\RA[G]$$ yields the uniform continuity of the identity map $\ulim G_n^{\LRs}\to\LR[G]$.

Now we discuss the interplay between the semitopological group $\RL[G]$ and the semiuniform group $\ulim G_n^{\RLs}$.
Since the topological embedding $G_n^{\RLs}\to G_{n+1}^{\RLs}$ in general is not a uniform embedding, Theorems~\ref{t7.2} and \ref{t7.3} cannot be applied to describing the uniform and topological structures of the uniform direct limit $\ulim G_n^{\RLs}$. So, this case requires a special treatment.

Given a pseudometric $d$ on a group $H$, let $d^{-1}$ be the {\em mirror pseudometric}  defined by
$$d^{-1}(x,y)=d(x^{-1},y^{-1})\mbox{ \ for $x,y\in H$}.$$

\begin{theorem} For a tower of topological groups $(G_n)_{n\in\w}$,
\begin{enumerate}
\item the uniformity of the uniform direct limit $\ulim G_n^{\RLs}$ is generated by the family of pseudometrics $\big\{\dlim \min\{d_n,d^{-1}_n\}\rozd (d_n)_{n\in\w}\in\mprod_{n\in\w}\PM_{G_n^{\Ls}}\big\};$
\item the uniformity of $\ulim G_n^{\RLs}$ coincides with the strongest uniformity on the group $G=\bigcup_{n\in\w}G_n$  such that the identity maps $\LA[G]\to G$  and $\RA[G]\to G$ are uniformly continuous\textup{;}
\item the identity map $\RL[G]\to\ulim G_n^{\RLs}$ is continuous\textup{;}
\item the identity map $\RL[G]\to\ulim G_n^{\RLs}$ is a homeomorphism if $\RL[G]$ is a topological group or if each identity inclusion $G_n^{\RLs}\to G_{n+1}^{\RLs}$, $n\in\w$, is a uniform embedding\textup{;}
\item $\ulim G_n^{\RLs}$ is a topological group if and only if the identity map $\ulim G_n^{\RLs}\to\glim G_n$ is a homeomorphism.
\end{enumerate}
\end{theorem}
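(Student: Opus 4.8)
The plan is to establish the explicit pseudometric description (1) first, to read off (2) from it, and then to deduce the topological statements (3)--(5) by short soft arguments. Throughout I write $\U_{\mathcal P}$ for the uniformity generated by the pseudometrics $\dlim\min\{d_n,d_n^{-1}\}$, $(d_n)_{n\in\w}\in\mprod_{n\in\w}\PM_{G_n^{\Ls}}$, and I use the identifications $\LA[G]=\ulim G_n^{\Ls}$ and $\RA[G]=\ulim G_n^{\Rs}$ proved above. For (1) the easy inclusion is that $\U_{\mathcal P}$ is coarser than the uniformity of $\ulim G_n^{\RLs}$: since $\ulim G_n^{\RLs}$ carries the strongest uniformity making every inclusion $G_n^{\RLs}\to G$ uniformly continuous, it suffices to check that each restriction of $\rho=\dlim\min\{d_n,d_n^{-1}\}$ to $G_n\times G_n$ is a uniform pseudometric for the Roelcke uniformity $\U^{\RLs}_{G_n}$. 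This is the one genuine computation on a single group: if $U\in\BB_n$ is so small that $d_n(u,e)<\e/2$ and $d_n^{-1}(u,e)<\e/2$ for all $u\in U$, and $x=u_1yu_2$ with $u_1,u_2\in U$, then the two--step chain $x\to yu_2\to y$ measures the left translation by $u_1$ with the right--invariant $d_n^{-1}$ and the right translation by $u_2$ with the left--invariant $d_n$, so that no conjugate of $U$ appears and $\rho(x,y)\le d_n^{-1}(u_1,e)+d_n(u_2,e)<\e$. Hence the Roelcke entourage $U^{\RLs}$ is contained in $\{\rho<\e\}$. Monotonicity of $\dlim$ also gives $\rho\le\dlim d_n$ and $\rho\le\dlim d_n^{-1}$, so every generator $\{\rho<\e\}$ of $\U_{\mathcal P}$ lies in the uniformities of both $\LA[G]$ and $\RA[G]$; this proves that $\U_{\mathcal P}$ is coarser than their meet.

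Combining the last remark with the uniformly continuous identity maps $\LA[G]\to\ulim G_n^{\RLs}$ and $\RA[G]\to\ulim G_n^{\RLs}$ displayed above, one gets a chain of inclusions in which $\U_{\mathcal P}$ is coarser than the uniformity of $\ulim G_n^{\RLs}$, which in turn is coarser than the meet of the uniformities of $\LA[G]$ and $\RA[G]$ (the strongest uniformity making both $\LA[G]\to G$ and $\RA[G]\to G$ uniformly continuous, i.e.\ the object of (2)). Thus both (1) and (2) follow at once from the single remaining inclusion: the meet is coarser than $\U_{\mathcal P}$. This is the main obstacle, and it is exactly where the failure of the inclusions $G_n^{\RLs}\to G_{n+1}^{\RLs}$ to be uniform embeddings forbids a direct appeal to Theorem~\ref{t7.2}. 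I would prove it by the chain/telescoping technique underlying Theorem~\ref{t7.2}, adapted to the mixed pseudometric $\min\{e_n,e_n^{-1}\}$: given an entourage $V$ of the meet, choose by the uniformity axioms a sequence $V=V_0\supseteq V_1\supseteq\cdots$ in the meet with $V_k\circ V_k\subseteq V_{k-1}$; each $V_k$, lying simultaneously in the uniformities of $\LA[G]$ and $\RA[G]$, contains a purely left and a purely right basic direct--limit entourage supplied by Theorem~\ref{t7.3}. Feeding these data into the limit operator produces a single monotone $(e_n)_{n\in\w}\in\mprod_{n\in\w}\PM_{G_n^{\Ls}}$ for which any short $\dlim\min\{e_n,e_n^{-1}\}$--path can be regrouped into boundedly many alternating pure--left and pure--right subpaths, each landing in some $V_k$; telescoping through the relations $V_k\circ V_k\subseteq V_{k-1}$ then keeps the whole path inside $V$, so $\{\dlim\min\{e_n,e_n^{-1}\}<\delta\}\subseteq V$ for suitable $\delta$.

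For (3) I would imitate Proposition~\ref{p2.1}. Given $\rho=\dlim\min\{d_n,d_n^{-1}\}$ and $\e>0$, pick $W_n\in\BB_n$ with $d_n(w,e),\,d_n^{-1}(w,e)<\e\,2^{-n-2}$ for $w\in W_n$. Peeling a word $w_m\cdots w_0w_0'\cdots w_m'\in\RLP_{0\le n\le m}W_n$ from the outside by the same two--step device used in (1) --- removing each outer left factor by a $d_n^{-1}$--step and each outer right factor by a $d_n$--step --- gives $\rho(e,\,w_m\cdots w_m')\le\sum_{n\le m}\big(d_n^{-1}(w_n,e)+d_n(w_n',e)\big)<\e$. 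Hence $\RLP_{n\in\w}W_n\subseteq\{x:\rho(e,x)<\e\}$ and the identity map $\RL[G]\to\ulim G_n^{\RLs}$ is continuous at $e$, so continuous. A soft auxiliary fact, needed in (4) and (5), is that the continuous homomorphisms $G_n\to\glim G_n$ induce uniformly continuous maps $G_n^{\RLs}\to(\glim G_n)^{\RLs}$, whence by the universal property of $\ulim G_n^{\RLs}$ the identity $\ulim G_n^{\RLs}\to(\glim G_n)^{\RLs}$ is uniformly continuous; since the Roelcke uniformity of the topological group $\glim G_n$ induces its own topology, the identity $\ulim G_n^{\RLs}\to\glim G_n$ is continuous.

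Finally, (4) and (5) are deduced formally. For (4): if every $G_n^{\RLs}\to G_{n+1}^{\RLs}$ is a uniform embedding, then $(G_n^{\RLs})_{n\in\w}$ is a genuine tower, Theorem~\ref{t7.3} applies, and its basic neighbourhoods of $e$ are exactly $B\big(e;\sum_{n\ge0}U_n^{\RLs}\big)=\RLP_{n\in\w}U_n$, i.e.\ the neighbourhoods of $\RL[\tau]$, so the continuous bijection of (3) is a homeomorphism; alternatively, if $\RL[G]$ is a topological group then by Theorem~\ref{t2.2} the identity $\RL[G]\to\glim G_n$ is a homeomorphism, and the continuous maps $\RL[G]\to\ulim G_n^{\RLs}\to\glim G_n=\RL[G]$ compose to the identity, which forces the first one to be a homeomorphism. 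For (5), the implication from the homeomorphism to the group property is immediate since $\glim G_n$ is a topological group; conversely, if $\ulim G_n^{\RLs}$ is a topological group then each inclusion $G_n\to\ulim G_n^{\RLs}$ is a continuous homomorphism, so the universal property of $\glim G_n$ makes the identity $\glim G_n\to\ulim G_n^{\RLs}$ continuous, and together with the auxiliary continuity of $\ulim G_n^{\RLs}\to\glim G_n$ this yields the desired homeomorphism.
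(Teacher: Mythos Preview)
Your treatment of items (3), (4), (5) and of the easy inclusions in (1)--(2) is correct and matches the paper. The issue is your ``main obstacle'': the inclusion of the meet of the $\LA[G]$- and $\RA[G]$-uniformities into $\U_{\mathcal P}$. Your proposed argument --- produce $(e_n)$ so that any short $\dlim\min\{e_n,e_n^{-1}\}$-path ``can be regrouped into boundedly many alternating pure--left and pure--right subpaths'' --- is not justified. A chain $x_0,\dots,x_m$ realizing $\dlim\min\{e_n,e_n^{-1}\}$ may switch between the $e_n$- and $e_n^{-1}$-alternatives at every step, so there is no a priori bound on the number of alternations independent of $m$; without such a bound, telescoping through $V_k\circ V_k\subseteq V_{k-1}$ only lands you in $V_0$ after logarithmically many iterations, and you have not controlled that. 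As written, this step is a gap.

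The paper avoids this difficulty entirely by a one-line symmetrization trick that you should use instead. Given an entourage $V$ of the meet (or of $\ulim G_n^{\RLs}$ for (1)), pick a uniform pseudometric $d$ with $\{d<1\}\subseteq V$. Inversion interchanges $\LA[G]$ and $\RA[G]$ and is uniformly continuous on $\ulim G_n^{\RLs}$, so in either case $\rho=\max\{d,d^{-1}\}$ is again uniform. Now simply set $d_n=\rho|G_n^2$. Then $(d_n)$ is monotone, each $d_n$ is left-uniform (since $\rho$ is uniform on $\LA[G]$ in case (2), or since $\U^{\Ls}\supseteq\U^{\RLs}$ in case (1)), and crucially $d_n=d_n^{-1}$, so $\min\{d_n,d_n^{-1}\}=d_n$. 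The triangle inequality for $\rho$ forces $\dlim d_n=\rho$, whence $\{\dlim\min\{d_n,d_n^{-1}\}<1\}=\{\rho<1\}\subseteq V$. No path-regrouping is needed; the whole point is that once the pseudometric is inversion-symmetric, $\dlim$ simply returns it.
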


\begin{proof} 1. First we show that for any monotone sequence of pseudometrics $(d_n)_{n\in\w}\in\mprod\limits_{n\in\w}\PM_{G_n^{\Ls}}$ the pseudometric $d_\infty=\dlim \min\{d_n,d_n^{-1}\}$ is uniform on $G=\ulim G_n^{\RLs}$. For this it suffices to check that $d_\infty$ is uniform on each quasiuniform group $G_n^{\RLs}$, $n\in\w$.

 Fix any $\e>0$. Since each  pseudometric $d_n\in\PM_{G_n^{\Ls}}$ is uniform with respect to the left uniformity on the topological group $G_n$, there is an open symmetric neighborhood $U_n\subset G_n$ of $e$ such that 
$U_n^{\Ls}\subset\{d_n<\e/2\}$. After inversion, we get the inclusion  $U_n^{\Rs}\subset\{d_n^{-1}<\e/2\}$. We claim that $U_n^{\RLs}\subset\{d_\infty|G_n^2<\e\}$. Take any points $(x,y)\in U_n^{\RLs}$. Then $y=uxv$ for some $u,v\in U_n$. 
Consider the chain of points $x_0=x$, $x_1=ux$, $x_2=uxv=y$ and observe that 
$$
\begin{aligned}
d_\infty(x,y)&\le \min\{d_{|x_0,x_1|}(x_0,x_1),d^{-1}_{|x_0,x_1|}(x_0,x_1)\}+
\min\{d_{|x_1,x_2|}(x_1,x_2),d^{-1}_{|x_1,x_2|}(x_1,x_2)\}\\
&\le \min\{d_n(x,ux),d^{-1}_n(x,ux)\}+\min\{d_n(ux,uxv),d^{-1}_n(ux,uxv)\}\\   &\le d_n(x^{-1},x^{-1}u^{-1})+d_n(ux,uxv)<\frac\e2+\frac\e2=\e,
\end{aligned}$$
witnessing that the pseudometric $d_\infty$ is uniform.
  
Now given any entourage $U\subset G^2$ that belongs to the uniformity of the space $\ulim G_n^{\RLs}$, we shall find a monotone sequence $(d_n)_{n\in\w}\in\mprod_{n\in\w}\PM_{G_n^{\Ls}}$ such that 
$\{d_\infty<1\}\subset U$ for the limit
pseudometric $d_\infty=\dlim\min\{d_n,d_n^{-1}\}$. 

By \cite[8.1.10]{En}, there is a uniform pseudometric $d$ on $\ulim G_n^{\RLs}$ such that  $\{(d<1\}\subset U$. Since the inversion is uniformly continuous on $\ulim G_n^{\RLs}$, the pseudometric $\rho=\max\{d,d^{-1}\}$ on $\ulim G_n^{\RLs}$ is uniform.
Now observe that for every $n\in\w$ the pseudometric $d_n=\rho|(G_n^{\RLs})^2$ on $G_n^{\RLs}$ is uniform and the sequence $(d_n)_{n\in\w}$ is monotone. The triangle inequality and the definition of the pseudometric $d_\infty=\dlim \min\{d_n,d_n^{-1}\}=\dlim d_n$ implies that $d_\infty=\rho$. In this case 
$\{d_\infty<1\}=\{\rho<1\}\subset\{d<1\}\subset U.$
\smallskip

2. Let $\U$ be the largest uniformity on $G$ such that the identity maps $\LA[G]\to(G,\U)$ and $\RA[G]\to(G,\U)$ are uniformly continuous. The definition of $\U$ implies that $\U$ coincides with its mirror uniformity $\U^-$ consisting of the sets $U^-=\{(x^{-1},y^{-1})\rozd (x,y)\in U\}$, $U\in\U$.

We need to show that $\U$ coincides with the uniformity of $\ulim G_n^{\RLs}$. The uniform continuity of the identity maps from $\LA[G]$ and $\RA[G]$ into $\ulim G_n^{\RLs}$ implies that $\U$ is larger then the uniformity of $\ulim G_n^{\RLs}$.
It remains to prove that each entourage $U\in\U$ belongs to the uniformity of $\ulim G_n^{\RLs}$. 
By \cite[8.1.10]{En}, there is a uniform pseudometric $d$ on $(G,\U)$ such that $\{d<1\}\subset U$. Since the inversion on $G$ is uniformly continuous with respect to the uniformity $\U$, the mirror pseudometric $d^{-1}$ is uniform on $(X,\U)$ and so is the pseudometric $\rho=\max\{d,d^{-1}\}$. Now we see that for each $n\in\w$ the restriction $d_n=\rho|G^2_n$ belongs to the family $\PM_{G_n^{\Ls}}$ and is equal to its mirror pseudometric $d_n^{-1}$. 
The sequence $(d_n)_{n\in\w}$ belongs to $\mprod\limits_{n\in\w}\PM_{G_n^{\Ls}}$, and the definition of the pseudometric $d_\infty$ implies that $d_\infty=\rho$. By the first item, the pseudometric $d_\infty$ is uniform on $\ulim G_n^{\LRs}$ and consequently, the entourage $$U\supset\{d<1\}\supset\{\rho<1\}=\{d_\infty<1\}$$belongs to the uniformity of the space $\ulim G_n^{\RLs}$. 
\smallskip

3. Since $\RL[G]$ and $\ulim G_n^{\RLs}$ are semitopological groups, the continuity of the identity map $\RL[G]\to\ulim G_n^{\RLs}$ is equivalent to its continuity at the neutral element $e$.

Given a neighborhood $O^{\RLs}(e)\subset\ulim G_n^{\RLs}$ of $e$, find a uniform pseudometric $d$ on $\ulim G_n^{\RLs}$ such that $\{x\in G\rozd d(x,e)<1\}\subset O^{\RLs}(e)$. For every $n\in\w$ the identity map $G_n^{\RLs}\to\ulim G_n^{\RLs}$ is uniformly continuous, so we can find a symmetric neighborhood $U_n\subset G_n$ of $e$ such that $U_n^{\RLs}\subset \big\{d|G_n^2<1/{2^{n+1}}\big\}$. By the definition of the topology $\RL[\tau]$ of the quasitopological group $\RL[G]$, the set $\RLP_{n\in\w}U_n$ is a neighborhood of $e$ in $\RL[G]$. We claim that 
$\RLP_{n\in\w}U_n\subset O^{\RLs}(e)$. Given any point $z\in \RLP_{n\in\w}U_n$, find two points $x\in\RP_{n\in\w}U_n$ and $y\in\LP_{n\in\w}U_n$ with $z=xy$.

By the definition of the directed products $\RP_{n\in\w}U_n$ and $\LP_{n\in\w}U_n$, there are chains of points $e=x_0,x_1,\dots, x_m=x$ and $e=y_0,y_1,\dots,y_m$ in $G$ such that $x_{i+1}\in U_ix_i$ and $y_{i+1}\in y_iU_i$ for all $i<m$. Now consider the chain $e=x_0y_0,x_1y_1,\dots,x_my_m=xy$ linking the points $e$ and $z=xy$.
Observe that for every $i<m$, \  
$x_{i+1}y_{i+1}\in U_ix_iy_{i}U_i$ implies $(x_{i+1}y_{i+1},x_iy_{i})\in U_i^{\RLs}$ and hence $d(x_{i+1}y_{i+1},x_iy_{i+1})<1/{2^{i+1}}$ by the choice of the neighborhood $U_i$. Consequently, 
$$d(e,xy)\le\sum_{i<m}d(x_iy_i,x_{i+1}y_{i+1})\le\sum_{i<m}\frac1{2^{i+1}}<1$$
and $xy\in O^{\RLs}(e)$ by the choice of the pseudometric $d$.
\smallskip

4a. If $\RL[G]$ is a topological group, then  the identity map $\RL[G]\to\glim G_n$ is a homeomorphism by Theorem~\ref{t2.2}. By the preceding item, the identity map $\RL[G]\to\ulim G_n^{\RLs}$ is continuous, and has continuous inverse, which is the composition of two continuous maps $\ulim G_n^{\RLs}\to\glim G_n\to \RL[G]$. 
\smallskip

4b. Assume that the identity maps $G_n^{\RLs}\to G_{n+1}^{\RLs}$, $n\in\w$, are uniform embeddings. In this case Theorem~\ref{t7.3} implies that the uniform direct limit $\ulim G_n^{\RLs}$ has the family $$\Big\{B\big(e;\sum_{n\in\w}U_n^{\RLs}\big)\rozd (U_n)_{n\in\w}\in\tprod_{n\in\w}\BB_n\Big\}$$as a neighborhood base at $e$. Since each set $B(e;\sum_{n\in\w}U_n^{\RLs})$ coincides with $\RLP_{n\in\w}U_n$, we see that the topologies of the semitopological groups $\ulim G_n^{\RLs}$ and $\RL[G]$ coincide at $e$ and thus coincide everywhere.
\smallskip

5. If $\ulim G_n^{\RLs}$ is a topological group, then the identity map $\glim G_n\to\ulim G_n^{\RLs}$ is continuous because its restrictions to the groups $G_n$ are continuous. The inverse identity  map $\ulim G_n^{\RLs}\to\glim G_n$ is continuous because it is uniformly continuous as the identity map into the topological group $(\glim G_n)^{\RLs}$ endowed with the Roelcke uniformity.

If the identity map $\ulim G_n^{\RLs}\to\glim G_n$ is a homeomorphism, then $\ulim G_n^{\RLs}$ is a topological group because $\glim G_n$ is a topological group.
\end{proof}

\section{Open Problems}\label{s:OP}

Summing up, we conclude that for any tower of topological groups $(G_n)_{n\in\w}$
\begin{itemize}
\item the direct limit $\glim G_n$ is a topological group,
\item $\tlim G_n$ and $\RL[G]$ are quasitopological groups,
\item $\LA[G]=\ulim G_n^{\Ls}$ and $\RA[G]=\ulim G_n^{\Rs}$ are semiuniform groups, and
\item $\LR[G]$, $\ulim G_n^{\LRs}$ and $\ulim G_n^{\RLs}$ are quasiuniform groups, 
\end{itemize}
having the union $G=\bigcup_{n\in\w}G_n$ as their underlying group.

The interplay between these semitopological and semiuniform groups are described in the following diagram.  A simple (resp. double) arrow indicates that the corresponding identity map is continuous (resp. uniformly continuous).

\begin{picture}(300,200)(-220,-100)
\put(-35,-2){$\LR[G]$}
\put(-65,3){\vector(1,0){23}}
\put(-65,3){\vector(1,0){20}}
\put(-20,11){\vector(1,1){12}}
\put(-20,11){\vector(1,1){10}}
\put(-20,-7){\vector(1,-1){12}}
\put(-20,-7){\vector(1,-1){10}}
\put(8,-18){\vector(1,1){13}}
\put(8,22){\vector(1,-1){13}}

\put(25,-2){$\RL[G]$}
\put(40,3){\vector(1,0){25}}
\put(-5,-28){$\RA[G]$}
\put(-5,25){$\LA[G]$}
\put(0,45){\vector(0,1){20}}
\put(0,45){\vector(0,1){17}}
\put(0,65){\vector(0,-1){20}}
\put(0,65){\vector(0,-1){23}}

\put(-120,0){$\ulim G_n^{\LRs}$}
\put(-147,3){\vector(1,0){20}}

\put(-90,15){\vector(4,3){74}}
\put(-90,15){\vector(4,3){71}}
\put(-90,-11){\vector(4,-3){73}}
\put(-90,-11){\vector(4,-3){70}}
\put(16,70){\vector(4,-3){73}}
\put(16,70){\vector(4,-3){70}}
\put(16,-65){\vector(4,3){73}}
\put(16,-65){\vector(4,3){70}}

\put(70,0){$\ulim G_n^{\RLs}$}
\put(125,3){\vector(1,0){20}}

\put(-195,0){$\tlim G_n$}
\put(152,0){$\glim G_n$}
\put(-20,80){$\ulim G_n^{\Ls}$}
\put(-20,-80){$\ulim G_n^{\Rs}$}

\put(0,-62){\vector(0,1){22}}
\put(0,-62){\vector(0,1){25}}
\put(0,-40){\vector(0,-1){20}}
\put(0,-40){\vector(0,-1){23}}
\end{picture}

Under certain conditions on the tower $(G_n)_{n\in\w}$ some of the identity maps in this diagram are homeomorphisms. In particular,
\begin{itemize}
\item $\tlim G_n\to\glim G_n$ is a homeomorphism if all topological groups $G_n$, $n\in\w$, are locally compact  \cite{TSH};
\item $\ulim G_n^{\LRs}\to\glim G_n$ is a homeomorphism if all topological groups $G_n$, $n\in\w$ are balanced \cite{BaR};
\item $\LR[G]\to\glim G_n$ is a homeomorphism if the tower $(G_n)_{n\in\w}$ is balanced or satisfies $\PTA$ (Theorems~\ref{t4.2} and \ref{t3.2});
\item $\RL[G]\to\glim G_n$ is a homeomorphism if the tower $(G_n)_{n\in\w}$ is bi-balanced (Theorem \ref{t5.2}).
\end{itemize}

Nonetheless many open questions related to this diagram remain unsolved. 

\begin{problem} Is the identity map $\LR[G]\to \ulim G_n^{\LRs}$ (uniformly) continuous?
\end{problem} 

\begin{problem} What can be said about separation properties of the quasitopological group $\RL[G]$? Is it always Tychonoff? Is the identity map $\RL[G]\to\ulim G_n^{\RLs}$ a homeomorphism?
\end{problem}

We define a topological space $X$ to be {\em Tychonoff} if for each closed subset $F\subset X$ and each point $x\in X\setminus F$ there is a continuous function $f:X\to\IR$ with $f(x)=1$ and $f(F)\subset\{0\}$. It is known that each uniform (not necessarily separated) space is Tychonoff. In particular, each semiuniform group is Tychonoff.

Surprisingly, but we know no (natural) example of a tower of topological groups $(G_n)_{n\in\w}$ for which the topology of $\glim G_n$ would be different from $\RL[\tau]$ or even $\LR[\tau]$. However, we expect counterexamples to the following problem.

\begin{problem} Is the identity map $\ulim G_n^{\RLs}\to\glim G_n$ a homeomorphism?
What about the identity map  $\ulim G_n^{\LRs}\to\glim G_n$?
\end{problem}

\section{Acknowledgment}

The authors express their thanks to the referee for many valuable remarks and suggestions improving the presentation.


\begin{thebibliography}{MM}

\bibitem{AT} \textsc{A.~Arhangel'skii and M.Tkachenko}, 
Topological groups and related structures,
Atlantis Press, Paris; World Sci. Publ., Hackensack, NJ, 2008.

\bibitem{Ba98} \textsc{T.~Banakh},
{On topological groups containing a Frechet-Urysohn fan},
Mat. Stud. {\bf 9} (1998), 149--154.

\bibitem{BMS} \textsc{T.~Banakh, K.~Mine and K.~Sakai},
{Classifying homeomorphism groups of infinite graphs}, 
Topology Appl. {\bf 156} (2009), 2845--2869.

\bibitem{BMSY} \textsc{T.~Banakh, K.Mine, K.Sakai and T.Yagasaki},
Homeomorphism groups and diffeomorphism groups of non-compact manifolds with the Whitney topology, 
Topology Proc. {\bf 37} (2011), 61--93.

\bibitem{BaR} \textsc{T.~Banakh and D.~Repov\v s}, 
{The topological structure of direct limits in the category of uniform spaces}, 
Topology Appl. {\bf 157} (2010), 1091--1100.

\bibitem{BaZ} \textsc{T.~Banakh and L.~Zdomskyy}, 
{The topological structure of (homogeneous) spaces and groups with countable cs*-character},
Appl. Gen. Topol. {\bf 5} (2004), 25--48. 


\bibitem{Dierolf} \textsc{S.~Dierolf and J.~Wengenroth},
{Inductive limits of topological algebras}, 
Linear Topol. Spaces Complex Anal. {\bf 3} (1997), 45--49. 

\bibitem{Eda}  \textsc{T.~Edamatsu}, { On the bamboo-shoot topology of certain inductive limits of topological groups}, J. Math. Kyoto Univ. {\bf 39} (1999), 715--724. 

\bibitem{En} \textsc{R.~Engelking}, 
General Topology, Heldermann Verlag, Berlin, 1989. 

\bibitem{Floret} \textsc{K.~Floret},
{Some aspects of the theory of locally convex inductive limits}, 
Functional analysis: surveys and recent results, II 
(Proc. Second Conf. Functional Anal., Univ. Paderborn, Paderborn, 1979),
pp. 205--237, North-Holland Math. Stud., 38, North-Holland, Amsterdam-New York, 1980.

\bibitem{Glo03} \textsc{H.~Gl\"ockner}, 
{Direct limit Lie groups and manifolds}, 
J. Math. Kyoto Univ. {\bf 43} (2003), 2--26. 

\bibitem{Glo06} \textsc{H.~Gl\"ockner}, 
{Direct limits of infinite-dimensional Lie groups compared to direct limits in related categories,}
J. Funct. Anal. 245 (2007),  19--61 (extended version at arXiv:math/0606078).


\bibitem{Glo08} \textsc{H.~Gl\"ockner},
{Direct limits of infinite-dimensional Lie groups}, 38 pp,
to appear in K.-H. Neeb and A. Pianzola (Eds.), 
``Trends and Developments in Infinite-Dimensional Lie Theory,''
Birkhauser (cf. arXiv:0803.0045v2). 

\bibitem{HSTH} \textsc{T.~Hirai, H.~Shimomura, N.~Tatsuuma and E.~Hirai}, 
{Inductive limits of topologies, their direct products, and problems related to algebraic structures}, 
J. Math. Kyoto Univ. {\bf 41} (2001), 475--505. 


\bibitem{RD} \textsc{W.~Roelcke and S.~Dierolf}, 
Uniform structures on topological groups and their quotients, McGraw-Hill, New York, 1981.

\bibitem{TSH} \textsc{N.~Tatsuuma, H.~Shimomura and T.~Hirai}, 
{On group topologies and unitary representations of inductive limits of topological groups and the case of the group of diffeomorphisms},
J. Math. Kyoto Univ. {\bf 38} (1998), 551--578. 

\bibitem{Us} \textsc{V.~Uspenskij}, 
On subgroups of minimal topological groups,
Topol. Appl. {\bf 155} (2008), 1580--1606.



\bibitem{Yama} \textsc{ A.~Yamasaki}, 
{Inductive limit of general linear groups},  
J. Math. Kyoto Univ. {\bf 38} (1998), 769--779.
 
\end{thebibliography}
\end{document}